\documentclass[9pt]{amsart}
\usepackage[utf8]{inputenc}
\usepackage{amscd}
\usepackage{amssymb}
\usepackage{pstricks}
\usepackage{empheq}
\usepackage{graphicx}
\usepackage{float}
\usepackage{todonotes}
\usepackage{subcaption}
\usepackage{xcolor}
\usepackage{textcomp}
\usepackage{cancel}
\usepackage[mathlines]{lineno}
\usepackage{comment}
\usepackage{ulem}
\newtheorem{theorem}{Theorem}

\newtheorem{lemma}{Lemma}
\newtheorem{definition}{Definition}

\newtheorem{remark}{Remark}
\newtheorem{example}{Example}
\numberwithin{equation}{section}
\numberwithin{lemma}{section}

\title[]{}

\title[]{$C^{1}$ Smoothness of the Conjugacy in a Delayed Nonautonomous Hartman--Grobman Theorem via $\mu$-Dichotomies
}
\author[Casta\~{n}eda]{\'Alvaro Casta\~{n}eda}
\author[Elorreaga]{Heli Elorreaga}
\address{Departamento de Matem\'aticas, Universidad de Chile, Casilla 653, Santiago, Chile.}
\email{castaneda@uchile.cl, heli.elorreaga@uchile.cl}
\keywords{Delayed equation, Smooth Linearization, $\mu$-dichotomy, Stability Perturbations of functional-differential equations, Nonautonomous differential equations.}
\subjclass{34D09,34k27,34k20,34k06}
\thanks{This research has been supported by Agency for Research and Development, ANID-Chile through the grants ANID/FONDECYT 1240361 (\'A. Casta\~{n}eda) and the FONDECYT Postdoctorado project 3240354 (H. Elorreaga).}

\begin{document}

\begin{abstract}
We investigate the differentiability of the conjugacy in a nonautonomous version of the Hartman--Grobman Theorem for systems with finite delay, where the linear part satisfies a $\mu$-dichotomy. Under suitable conditions on the nonlinear perturbation, the conjugacy is shown to be a $C^{1}$ diffeomorphism.

\end{abstract}

\maketitle

\section{Introduction}
The notion of linearization near hyperbolic equilibria is essential for the qualitative analysis of dynamical systems.  The classical Hartman–Grobman theorem guarantees that nonlinear systems are locally topologically equivalent to their linearization in the vicinity of hyperbolic fixed points.  This fundamental outcome has been thoroughly generalized and enhanced, encompassing both nonautonomous and delayed equations.

 Palmer's \cite{Palmer} generalization in nonautonomous differential equations by using the concept of exponential dichotomy, establishing global topological equivalence between nonlinear systems and their linear part.  Subsequent study has concentrated on examining the differentiability characteristics of these conjugacies.  Castañeda and Robledo \cite{Castaneda4} delineated significant requirements for the establishment of differentiable conjugation, particularly emphasizing scenarios where the stable manifold is present solely. Later, Jara \cite{Jara} proves $C^2$ smoothness of nonautonomous linearization without spectral conditions, covering nonuniform hyperbolicities (including stable/unstable manifolds); Dragičević \textit{et al.} \cite{Dragicevic2} accomplished additional extensions to nonuniform exponential dichotomies and eliminated conventional resonance criteria, therefore greatly enhancing the application of differentiable linearization results.

 Concurrent developments in delay differential equations have also arisen.  Barreira and Valls \cite{B-v-per} established a Hartman–Grobman-type theorem for delayed nonautonomous systems, achieving conjugacies within the framework of unstable manifolds.  Expanding on this concept, G\'omez \textit{et al.} \cite{GCE} specifically examined the differentiability element, finding sufficient requirements for a $C^1$ diffeomorphism, contingent upon exponential decay rates and perturbation characteristics.

 In addition to these findings, there is a rapidly expanding and dynamic literature on general growth rates and $\mu$-dichotomies that extends the concept of nonuniform hyperbolicity beyond the exponential case. Silva introduced the nonuniform $\mu$-dichotomy spectrum and obtained reducibility/kinematic-similarity results unifying spectral descriptions for arbitrary growth rates \cite{Silva}. Backes and Dragičević characterized $(\mu,\nu)$-dichotomies via admissibility in weighted spaces, sharpening the structural role of growth rates and dispensing with bounded-growth/Lyapunov-norm assumptions \cite{BackesDragicevicAdmissibility}. Complementarily, Backes developed the $(\mu,\nu)$-dichotomy spectrum, describing its possible structures (in Banach settings with negative $\mu$-index of compactness) and giving normal-form applications \cite{BackesJMAA2025}. In parallel, Castañeda–Jara generalized Siegmund’s normal forms to systems with $\mu$-dichotomy, phrasing nonresonance conditions in terms of the associated $\mu$-spectrum \cite{CJ}. Very recently, Jara–Gallegos identified a spectrum invariance dilemma showing that nonuniform $\mu$-dichotomy spectra need not be preserved under nonuniform $(\mu,\varepsilon)$-kinematic similarity, and introduced quantitative devices (optimal ratio maps, $\varepsilon$-neighborhoods) to measure this loss \cite{GJ}.

 Inspired by these advances, this paper goes beyond prior research by treating delayed nonautonomous systems whose linear part exhibits nonuniform exponential or the more general $\mu$-dichotomy (as in Silva’s framework) \cite{Silva}. Under broad, verifiable hypotheses—without spectral gap or resonance constraints—we establish global differentiability of the topological conjugacy.
 
\subsection*{Structure of the article}

The structure of the article is as follows. In Section 2, we introduce the necessary background and define the functional setting, including the nonuniform dichotomy and growth conditions. In Section 3, we define the key operator $F$ whose fixed points yield the desired conjugacy. In last section, we state and prove the main result of the paper, showing the existence of a $C^1$-diffeomorphism that conjugates the linear and nonlinear systems. Finally, we provide an explicit example of parameters that satisfy all the hypotheses of the theorem, thus illustrating the applicability of our result.

\section{Basic Concepts ans Notations}
\subsection{Linear Delay Systems in $C([-r,0],\mathbb{R}^n)$ and Their Semigroups}

Given $r>0$, let $C:=C([-r,0],\mathbb{R}^n)$ be the Banach space of all continuous functions $\varphi:[-r,0]\to\mathbb{R}^n$ equipped with the supremum norm. We consider the following linear delay  equation
\begin{equation}\label{eq-lineal}
\begin{array}{rcl}
  x'(t) &=& L(t)x_t, \ \ t\geq s,\\
  x_s &=& \phi,
\end{array}
\end{equation}
where  $\phi, x_t\in C$ and  $x_t(\omega):=x(t+\omega)$, for  $\omega\in[-r,0]$.   $L(t):C\to \mathbb{R}^n$ are continuous linear operators for all $t\in\mathbb{R}$.

A continuous function $x:[s-r,a)\to \mathbb{R}^n$ where $a\leq+\infty$ is a solution of \eqref{eq-lineal} if $x$ is absolutely continuous on $[s,a)$  and satisfies \eqref{eq-lineal} for almost every $t\in[s,a)$. Note that for such $x$, it follows that $x_t\in C$ for every $t\in[s,a)$.  Standard results on the existence and uniqueness of solutions of a delay differential equation imply that, under the above assumptions on the operators $L(t)$, the equation \eqref{eq-lineal} has a unique solution on $[s-r,+\infty)$.   

Let  $x$ be the unique solution of (\ref{eq-lineal}). The solution operator $T(t,s):C\to C$ is defined by
\begin{equation*}
  T(t,s)\phi:=x_t, \qquad \mbox{ for } t\geq s \mbox{ and } \phi\in C,
\end{equation*}
 and it is a semigroup for $t\geq s$, linear and continuous in $\phi\in C$, and  strongly continuous in $t$ and $s$. That is, for $t$ fixed (respectively $s$ fixed) the map $s\mapsto T(t,s)\phi$ (respectively $t\mapsto T(t,s)\phi$) is continuous with the norm in $C$.

 \subsection{Growth Rates and $\mu$-Dichotomies}

 In order to capture, in a linear system, solution behaviors that go beyond the classical exponential patterns, Silva \cite{Silva} proposed the notion of a growth rate, which generalizes the exponential case. For completeness, we recall these definitions below.

 \begin{definition}
     A function $\mu:\mathbb{R}\to(0,+\infty)$ is said to be a growth rate if it is a strictly increasing function, $\mu(0)=1$, $\mu(t)\to+\infty$ as $t\to+\infty$ and $\mu(t)\to0$ as $t\to-\infty$. Moreover, if $\mu$ is differentiable, it is said a differentiable growth rate.
 \end{definition}
 
\begin{definition}\label{mu-D}
Let $\mu:\mathbb{R}\to (0,+\infty)$ be a growth rate. We will say that  \eqref{eq-lineal} has a nonuniform $\mu$-dichotomy on an  interval $J$, with constants $K\geq 1$, $\alpha, \beta>0$ and $\theta, \nu\geq 0$, satisfying $\alpha>\theta$ and $\beta>\nu,$  if there are  strongly continuous projections $P(s)$, $Q(s)=I-P(s)$, $s\in J$, such that:

 \begin{enumerate}
  \item[(i)] $T(t,s)P(s)=P(t)T(t,s)$, $t\geq s$ in $J$;
  \item[(ii)] $\overline{T}(t,s):=T(t,s)|_{\mathcal{R}Q(s)}$, $t\geq s$ is an isomorphism of $\mathcal{R}Q(s)$ onto $\mathcal{R}Q(t)$, and $\overline{T}(s,t):\mathcal{R}Q(t)\to\mathcal{R}Q(s)$ is defined as the inverse of $\overline{T}(t,s)$;
  \item[(iii)] $\|T(t,s)P(s)\|\leq K\left(\dfrac{\mu(t)}{\mu(s)}\right)^{-\alpha}\mu(s)^{\textnormal{sgn}(s)\theta}$, for  $t\geq s$ in $J$;
  \item[(iv)] $\|\overline{T}(t,s)Q(s)\|\leq K\left(\dfrac{\mu(t)}{\mu(s)}\right)^{\beta}\mu(s)^{\textnormal{sgn}(s)\nu}$, for $t\leq s$ in $J$.
\end{enumerate}
Moreover, if $\theta=\nu=0$, then we say that $T(t,s)$ admits uniform $\mu$-dichotomy on $J$.
\end{definition}

We define the stable and unstable subspaces of $T(t,s)$ at time $t$, respectively by
\begin{equation*}
E(t):=\mathcal{R}P(t) \ \ \mbox{ and } \ \ U(t):=\mathcal{R}Q(t), \ \ t\in J.
\end{equation*}

In this work, we focus on differentiable growth rates $\mu$ that satisfy the following key property:

\begin{enumerate}
    \item[(\textbf{H})] There exists a constant $N=N(r)>1$ such that
    \begin{equation*}
    \frac{\mu(s+r)}{\mu(s)}\leq N, \quad \textnormal{for all } s\in \mathbb{R}.
\end{equation*}
\end{enumerate}

As illustrative examples of these growth rates that generalize other dichotomies, we have:

\begin{example}
    Defining $\mu(t)=e^t$, for all $t\in\mathbb{R}$ and considering $\beta=\alpha$ and $\theta=\nu=0$, we recover the notion of exponential dichotomy. Moreover, if the parameters are considering as in Definition  \ref{mu-D}, we arrive at the concept of nonuniform exponential dichotomy. On the other hand, it can observe that this  growth rate is differentiable and satisfies \textbf{(H)}, where 
    $$\frac{\mu(s+r)}{\mu(s)}=e^r,$$
    and $N=e^r.$
\end{example}

\begin{example}
    Consider $\mu:\mathbb{R}\to(0,\infty)$ defined by
    $$\mu(t)=\left\{\begin{array}{cc}
        t+1, & t\geq0; \\
        \dfrac{1}{1-t}, & t\leq0.
    \end{array}\right.$$
    This function is a differentiable growth rate satisfying \textbf{(H)} with 
    $$\frac{\mu(s+r)}{\mu(s)}=\left\{\begin{array}{cc}
       \dfrac{s+r+1}{s+1},  & s\geq0; \\
        -x^2-rx+r+1, & -r\leq s\leq 0;\\
        \dfrac{1-s}{1-s-r}, & s\leq -r;
    \end{array}\right.$$
    and $N=\dfrac{r^2}{4}+r+1$.
\end{example}

\begin{example}
    The function $\mu:\mathbb{R}\to(0,+\infty)$ defined as
    $$\mu(t)=\left\{\begin{array}{cc}
        \ln(t+e), & t\geq0; \\
        \dfrac{1}{\ln(e-t)}, & t\leq0,
    \end{array}\right.$$
    is a differentiable growth rate satisfying \textbf{(H)} with 
    $$\frac{\mu(s+r)}{\mu(s)}=\left\{\begin{array}{cc}
       \dfrac{\ln(s+r+e)}{\ln(s+e)},  & s\geq0; \\
        \ln(s+r+e)\ln(e-s), & -r\leq s\leq 0;\\
        \dfrac{\ln(e-s)}{\ln(e-s-r)}, & s\leq -r;
    \end{array}\right.$$
    and $N=\ln^2(e+r/2)$.
    
\end{example}

\subsection{Perturbation}
We are interested in perturbations of the linear delay equation \eqref{eq-lineal} of the form

\begin{equation}\label{eq-per}
    x'(t) = L(t)x_t + g(t,x_t),
\end{equation}
where $g:\mathbb{R}\times C\to \mathbb{R}^n$ is a continuous function such that $g(t,0)=0$ for all $t\in\mathbb{R}$. We will also assume that $g$ is Lipschitz in the second variable. It can observe that, under these assumptions, results on the existence an uniqueness  of solutions of a delay equation guarantee that for a given $s\in\mathbb{R}$ and $\phi\in C$, the equation \eqref{eq-per} has a unique solution $x$ on $[s-r,+\infty)$ with $x_s=\phi.$ We refer to \cite{Hale} and \cite{Hale-Lunel} for more details.

Now, in order to describe the variation of constants formula, we must extend the linear operator $T(t,s)$ to the suitable phase space $C_0$, given by
$$C_0:=\{\phi:[-r,0]\to \mathbb{R}^n / \varphi \textnormal{ is continuous on } [-r,0), \exists \lim\limits_{\omega \to 0^{-}}\phi(\omega) \}.$$
This is a Banach space with the supremum norm. We know that, by the Riez's representation theorem, the linear operator $L(t):C\to\mathbb{R}^n$ can be written as a Riemann-Stieltjes integral:

\begin{equation}\label{Riez}
L(t)\phi = \int_{-r}^{0}d\eta(t,\omega)\phi(\omega),
\end{equation}
for some $n\times n$ matrix function $\eta(t,\omega)$ of bounded variation in $\omega\in[-r,0]$ for each $t\in\mathbb{R}$, measurable in $(t,\omega)\in\mathbb{R}\times\mathbb{R}$ and normalized in the following sense
$$\eta(t,\omega)=\left\{\begin{array}{rl}
    0, & \textnormal{ for } \omega\geq0,  \\
    \eta(t,-r), &  \textnormal{ for } \omega\leq -r,
\end{array}\right.$$
$\eta(t,\omega)$ is continuous from the left in $\theta\in(-r,0)$ and there exists some function $m\in L^1_{loc}(\mathbb{R})$ such that
$$Var_{[-r,0]}\eta(t,\cdot)\leq m(t), \quad \textnormal{ for all } t\in\mathbb{R}.$$
Since that each $\phi\in C_0$ is Riemann-Stieltjes integrable with respect to $\eta(t,\cdot)$ for each $t\in\mathbb{R}$, the linear operator $L(t)$ can be extended to $C_0$ using the integral in \eqref{Riez}. 

Next, for given $t,s\in\mathbb{R}$ with $t\geq s$, we can define a linear operator $T_0(t,s)$ on $C_0$ as
$$T_0(t,s)\phi=x_t \quad \textnormal{ for } t\geq s \textnormal{ and } \phi\in C_0,$$
where $x$ is the unique solution of equation \eqref{eq-lineal} on $[s-r,+\infty)$ with $x_s=\phi$. 

We assume throughout that the linear operator
 $T_0(t,s)$ has nonuniform $\mu$--bounded growth with parameter $\varepsilon>0$, that is, there are constants $\tilde{K}\geq1$, $a\geq0$ such that

\begin{equation}\label{BG}
    \|T_0(t,s)\|\leq \tilde{K}\left(\frac{\mu(t)}{\mu(s)}\right)^{\textnormal{sgn}(t-s)a}\mu(s)^{\textnormal{sgn}(s)\varepsilon}, \quad \textnormal{ for all } t,s\in\mathbb{R}.
\end{equation}

By the variation of constants formula for delay differential equations, we know that a function $x:[s-r,+\infty)\to\mathbb{R}^n$ is the unique solution of equation \eqref{eq-per} with $x_s=\phi$ if it satisfies 
\begin{equation*}
    x_t = T(t,s)\phi + \int_s^t T_0(t,\tau)X_0g(\tau)d\tau,
\end{equation*}
for all $t\geq s$, where $X_0:\mathbb{R}^n\to C_0$ is a linear operator defined by
$$(X_0p)(\omega)=\left\{\begin{array}{rl}
    0, & \textnormal{ if } \omega\in[-r,0) \\
    p, &  \textnormal{ if } \omega=0,
\end{array}\right.$$
for each $p\in\mathbb{R}^n$.

On the other hand, assuming that the linear equation \eqref{eq-lineal} has nonuniform $\mu$-dichotomy on an  interval $J\supset \mathbb{R}_0^{+}$, for each $t\in J$ we define the linear operators $P_0(t), Q_0(t): \mathbb{R}^n\to C_0$ by
$$P_0(t)=X_0-Q_0(t) \quad \textnormal{ and } \quad Q_0(t)=\overline{T}(t,t+r)Q(t+r)T_0(t+r,t)X_0.$$

The following result extends to the $\mu$-dichotomy setting the bounds previously established in \cite[Proposition 4]{BV} and \cite[Proposition 1]{B-v-per}, now formulated in the space $C_0$ in terms of the operators $\overline{T}(t,s)$ and $T_0(t,s)$.

\begin{lemma}
    Consider a differentiable growth rate $\mu$ satisfying \textbf{\textbf{(H)}}. Assume that the linear equation \eqref{eq-lineal} has nonuniform $\mu$--dichotomy on some interval $J\supset \mathbb{R}_0^{+}$ and \eqref{BG} holds. If  \begin{subequations}\label{hyp:mu-dichotomy}
\begin{align}
\theta &\geq \varepsilon + \nu, \label{hyp:mu-dichotomy-theta} \\
\alpha &> \theta + \varepsilon, \label{hyp:mu-dichotomy-alpha} \\
\beta  &> \nu + \varepsilon. \label{hyp:mu-dichotomy-beta}
\end{align}
\end{subequations}

Then there exists a positive constant $D>0$ such that 
    \begin{equation}\label{B1}
        \|T_0(t,s)P_0(s)\|\leq D\left(\frac{\mu(t)}{\mu(s)}\right)^{-\alpha}\mu(s)^{\textnormal{sgn}(s)(\theta+\varepsilon)} , \quad \textnormal{ for } t\geq s \textnormal{ in } J;
    \end{equation}
    and
    \begin{equation}\label{B2}
         \|\overline{T}(t,s)Q_0(s)\|\leq D\left(\frac{\mu(t)}{\mu(s)}\right)^{\beta}\mu(s)^{\textnormal{sgn}(s)(\nu+\varepsilon)} , \quad \textnormal{ for } t\leq s \textnormal{ in } J.
    \end{equation}
\end{lemma}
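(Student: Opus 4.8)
The plan is to reduce the bounds \eqref{B1}--\eqref{B2} on $C_0$ to the dichotomy bounds (iii)--(iv) of Definition \ref{mu-D} on $C$ together with the bounded-growth estimate \eqref{BG}, by passing through the solution segment at time $s+r$, and then to absorb all the ratios $\mu(s+r)/\mu(s)$ using hypothesis \textbf{(H)}.

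First I would record two algebraic identities. The crucial preliminary observation is that $T_0(s+r,s)X_0p\in C$ for every $p\in\mathbb{R}^n$: the solution of \eqref{eq-lineal} with initial segment $X_0p$ is absolutely continuous — hence continuous — on $[s,+\infty)$, so its time-$(s+r)$ segment, which only sees the interval $[s,s+r]$, has no jump at $0$. Consequently $T_0(t,s)X_0=T(t,s+r)\,T_0(s+r,s)X_0$ for all $t\geq s+r$. Splitting $I=P(s+r)+Q(s+r)$ on $C$, using $T(t,s+r)Q(s+r)=\overline{T}(t,s+r)Q(s+r)$ for $t\geq s+r$ and the cocycle identity $\overline{T}(t,s+r)=\overline{T}(t,s)\,\overline{T}(s,s+r)$ on the unstable fibres, the $Q(s+r)$-part is recognized as $T_0(t,s)Q_0(s)$ (note that $Q_0(s)$ maps $\mathbb{R}^n$ into $\mathcal{R}Q(s)\subset C$, so $T_0$ and $\overline{T}$ agree on its range), which yields
\begin{equation*}
T_0(t,s)P_0(s)=T(t,s+r)P(s+r)\,T_0(s+r,s)X_0,\qquad t\geq s+r,
\end{equation*}
and, by the same cocycle manipulation run with $t\leq s$,
\begin{equation*}
\overline{T}(t,s)Q_0(s)=\overline{T}(t,s+r)Q(s+r)\,T_0(s+r,s)X_0,\qquad t\leq s.
\end{equation*}

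Next comes the elementary but repeatedly used inequality: for every $\rho\geq0$ and every $s\in\mathbb{R}$,
\begin{equation*}
\mu(s+r)^{\textnormal{sgn}(s+r)\rho}\leq N^{\rho}\,\mu(s)^{\textnormal{sgn}(s)\rho},
\end{equation*}
which one checks by distinguishing the cases $s\geq0$, $-r\leq s<0$ and $s<-r$ and using $\mu(s+r)\leq N\mu(s)$, monotonicity of $\mu$ and $\mu(0)=1$. With this at hand, \eqref{B2} and \eqref{B1} for $t\geq s+r$ are immediate: insert \eqref{BG} (and $\|X_0\|=1$) and (iv), respectively (iii), into the identities above; the ratio $(\mu(t)/\mu(s+r))^{\pm}$ becomes $(\mu(t)/\mu(s))^{\pm}$ up to a power of $N$ (using also $\mu(s)/\mu(s+r)\leq1$), while the exponents of $\mu(s)$ add up to $\textnormal{sgn}(s)(\nu+\varepsilon)$ and $\textnormal{sgn}(s)(\theta+\varepsilon)$ respectively, as required.

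It remains to prove \eqref{B1} on the short window $s\leq t\leq s+r$, where the first identity does not apply. There \eqref{BG} gives $\|T_0(t,s)\|\leq\tilde{K}N^{a}\mu(s)^{\textnormal{sgn}(s)\varepsilon}$, and combining (iv) with \eqref{BG} bounds $\|Q_0(s)\|\leq C\mu(s)^{\textnormal{sgn}(s)(\nu+\varepsilon)}$; since $P_0(s)=X_0-Q_0(s)$, the worst term in $\|T_0(t,s)P_0(s)\|$ is of size $\mu(s)^{\textnormal{sgn}(s)(\nu+2\varepsilon)}$. This is where hypothesis \eqref{hyp:mu-dichotomy-theta} ($\theta\geq\nu+\varepsilon$) is used decisively: it gives $\nu+2\varepsilon\leq\theta+\varepsilon$, so — separating $\mu(s)\geq1$ for $s\geq0$ from $\mu(s)<1$ for $s<0$ — that bound is dominated by $C\mu(s)^{\textnormal{sgn}(s)(\theta+\varepsilon)}$, and the factor $(\mu(t)/\mu(s))^{-\alpha}$ is reinstated at cost $N^{\alpha}$ because $1\leq\mu(t)/\mu(s)\leq N$ on this window. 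Taking $D$ to be the maximum of the finitely many constants produced completes the proof; conditions \eqref{hyp:mu-dichotomy-alpha}--\eqref{hyp:mu-dichotomy-beta} are precisely what guarantees that \eqref{B1}--\eqref{B2} is again a nonuniform $\mu$-dichotomy in the sense of Definition \ref{mu-D}, now with parameters $\theta+\varepsilon$ and $\nu+\varepsilon$. I expect the main obstacle to be not the propagation of the dichotomy — routine once the two identities are in place — but the careful bookkeeping of the nonuniform factors $\mu(s)^{\textnormal{sgn}(s)(\cdot)}$ across the sign changes of $s$ and $s+r$, together with recognizing that \eqref{hyp:mu-dichotomy-theta} is exactly the inequality that reabsorbs the extra $\varepsilon$ lost on the short-time window.
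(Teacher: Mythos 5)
Your proposal is correct and follows essentially the same route as the paper: prove \eqref{B2} directly from the definition of $Q_0$, derive the identity $T_0(t,s)P_0(s)=T(t,s+r)P(s+r)T_0(s+r,s)X_0$ for $t\geq s+r$, treat the short window $t\in[s,s+r]$ separately via a bound on $\|Q_0(s)\|$ combined with \eqref{hyp:mu-dichotomy-theta}, and absorb all $\mu(s+r)/\mu(s)$ ratios with \textbf{(H)}. The only cosmetic difference is that you package the sign case analysis into the single inequality $\mu(s+r)^{\textnormal{sgn}(s+r)\rho}\leq N^{\rho}\mu(s)^{\textnormal{sgn}(s)\rho}$, whereas the paper repeats the three-case argument each time.
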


\begin{proof}
We begin by proving the inequality \eqref{B2}.
    Take $p\in\mathbb{R}^n$. Then, from \textbf{(H)} and \eqref{BG}  we have that for $t\leq s$: 
    \begin{align*}
       \|\overline{T}(t,s)Q_0(s)p\| &= \|\overline{T}(t,s+r)Q(s+r)T_0(s+r,t)X_0 p\|\\
       &\leq K\left(\frac{\mu(t)}{\mu(s+r)}\right)^{\beta}\mu(s+r)^{\textnormal{sgn}(s+r)\nu}\tilde{K}\left(\frac{\mu(s+r)}{\mu(s)}\right)^{\textnormal{sgn}(r)a}\mu(s)^{\textnormal{sgn}(s)\varepsilon}\|p\|\\
       &= K\tilde{K}\left(\frac{\mu(t)}{\mu(s)}\right)^{\beta}\left(\frac{\mu(s+r)}{\mu(s)}\right)^{a-\beta}\mu(s+r)^{\textnormal{sgn}(s+r)\nu}\mu(s)^{\textnormal{sgn}(s)\varepsilon}\|p\|\\
       &\leq K\tilde{K}N^{|a-\beta|}\left(\frac{\mu(t)}{\mu(s)}\right)^{\beta}\mu(s+r)^{\textnormal{sgn}(s+r)\nu}\mu(s)^{\textnormal{sgn}(s)\varepsilon}\|p\|\\
       &=  K\tilde{K}N^{|a-\beta|}\left(\frac{\mu(t)}{\mu(s)}\right)^{\beta}\left(\frac{\mu(s+r)}{\mu(s)}\right)^{\textnormal{sgn}(s+r)\nu}\mu(s)^{\textnormal{sgn}(s)\varepsilon+\textnormal{sgn}(s+r)\nu}\|p\|\\
       &\leq K\tilde{K}N^{|a-\beta|+\nu}\left(\frac{\mu(t)}{\mu(s)}\right)^{\beta}\mu(s)^{\textnormal{sgn}(s)\varepsilon+\textnormal{sgn}(s+r)\nu}\|p\|.
    \end{align*}

    Observe that for $0\leq t\leq s$ we get that $\mu(s)\geq 1$ and so

    \begin{align*}
         \|\overline{T}(t,s)Q_0(s)p\| &\leq K\tilde{K}N^{|a-\beta|+\nu}\left(\frac{\mu(t)}{\mu(s)}\right)^{\beta}\mu(s)^{\varepsilon+\nu}\|p\|\\
         &=  K\tilde{K}N^{|a-\beta|+\nu}\left(\frac{\mu(t)}{\mu(s)}\right)^{\beta}\mu(s)^{\textnormal{sgn}(s)(\varepsilon+\nu)}\|p\|.
    \end{align*}
    The case $t\leq 0\leq s$ is similar to the previous one. For the case $t\leq s\leq 0$ we have that $\mu(s)\leq1$. Moreover,  we will consider two subcases: 
    \begin{enumerate}
        \item[(i)] When $-r\leq s\leq 0$: 
        \begin{align*}
            \|\overline{T}(t,s)Q_0(s)p\| &\leq K\tilde{K}N^{|a-\beta|+\nu}\left(\frac{\mu(t)}{\mu(s)}\right)^{\beta}\mu(s)^{-\varepsilon+\nu}\|p\|\\
            &\leq K\tilde{K}N^{|a-\beta|+\nu}\left(\frac{\mu(t)}{\mu(s)}\right)^{\beta}\mu(s)^{\textnormal{sgn}(s)(\varepsilon+\nu)}\|p\|.
        \end{align*}

        \item[(ii)] When $s\leq -r< 0$:
        \begin{align*}
           \|\overline{T}(t,s)Q_0(s)p\| &\leq K\tilde{K}N^{|a-\beta|+\nu}\left(\frac{\mu(t)}{\mu(s)}\right)^{\beta}\mu(s)^{-\varepsilon-\nu}\|p\|\\
           &= K\tilde{K}N^{|a-\beta|+\nu}\left(\frac{\mu(t)}{\mu(s)}\right)^{\beta}\mu(s)^{\textnormal{sgn}(s)(\nu+\varepsilon)}\|p\|.
        \end{align*}
    \end{enumerate}
    Hence,
    $$\|\overline{T}(t,s)Q_0(s)\| \leq K\tilde{K}N^{|a-\beta|+\nu}\left(\frac{\mu(t)}{\mu(s)}\right)^{\beta}\mu(s)^{\textnormal{sgn}(s)(\nu+\varepsilon)}, \quad \textnormal{ for } t\leq s.$$
   This proves the inequality \eqref{B2}. Combined with condition \eqref{hyp:mu-dichotomy-beta}, it yields the expected relation between the nonuniformity bound and the decay rate.

On the other hand, taking $t=s$ in the above inequality we obtain
    $$\|Q_0(s)\|\leq K\tilde{K}N^{|a-\beta|+\nu}\mu(s)^{\textnormal{sgn}(s)(\nu+\varepsilon)},$$
    and thus, for $t\in[s,s+r]$, from \eqref{BG} we have
    \begin{align*}
        \|T_0(t,s)P_0(s)\| &\leq \tilde{K}\left(\frac{\mu(t)}{\mu(s)}\right)^{a}\mu(s)^{\textnormal{sgn}(s)\varepsilon}\|X_0-Q_0(s)\|\\
        &\leq \tilde{K}\left(\frac{\mu(s+r)}{\mu(s)}\right)^{a}\mu(s)^{\textnormal{sgn}(s)\varepsilon}(1+K\tilde{K}N^{|a-\beta|+\nu}\mu(s)^{\textnormal{sgn}(s)(\nu+\varepsilon)})\\
        &\leq \tilde{K}N^{a}\mu(s)^{\textnormal{sgn}(s)\varepsilon}\left(1+K_1\mu(s)^{\textnormal{sgn}(s)(\nu+\varepsilon)}\right),
    \end{align*}

    where $K_1=K\tilde{K}N^{|a-\beta|+\nu}$.
    
    Having established the inequality \eqref{B2}, we now turn to the proof of the inequality \eqref{B1}. For $s\geq0$, by \eqref{hyp:mu-dichotomy-theta} and $\mu(s)\geq1$, the last inequality implies that
    \begin{align*}
        \|T_0(t,s)P_0(s)\| &\leq \tilde{K}N^{a}\mu(s)^{\varepsilon}(1+K_1\mu(s)^{\nu+\varepsilon})\\
        &\leq \tilde{K}N^{a}\mu(s)^{\varepsilon}(\mu(s)^{\theta}+K_1\mu(s)^{\theta}),
    \end{align*}
    and so
    \begin{equation}\label{P1-eq1}
        \|T_0(t,s)P_0(s)\| \leq \tilde{K}N^{a}(1+K_1)\mu(s)^{\textnormal{sgn}(s)(\theta+\varepsilon)}.
    \end{equation}
    
    For the case when $s\leq 0$, \eqref{hyp:mu-dichotomy-theta} and $\mu(s)\leq1$ imply that

    \begin{align*}
        \|T_0(t,s)P_0(s)\| &\leq \tilde{K}N^{a}\mu(s)^{-\varepsilon}(1+K_1\mu(s)^{-\nu-\varepsilon})\\
        &\leq \tilde{K}N^{a}\mu(s)^{-\varepsilon}(\mu(s)^{-\theta}+K_1\mu(s)^{-\theta}),
    \end{align*}
    and so
    \begin{equation}\label{P1-eq2}
        \|T_0(t,s)P_0(s)\| \leq \tilde{K}N^{a}(1+K_1)\mu(s)^{\textnormal{sgn}(s)(\theta+\varepsilon)}.
    \end{equation}
    Hence, for $t\in[s,s+r]$ from \eqref{P1-eq1} and \eqref{P1-eq2}  it follows that
    \begin{equation}\label{P1-eq3}
        \|T_0(t,s)P_0(s)\| \leq \tilde{K}N^{a}(1+K_1)\mu(s)^{\textnormal{sgn}(s)(\theta+\varepsilon)}.
    \end{equation}

    Now, taking $t\geq s+r$ we have that
    \begin{align*}
        T_0(t,s)P_0(s) &= T_0(t,s)(X_0-Q_0(s))\\
        &= T(t,s+r)T_0(s+r,s)(X_0-\overline{T}(s,s+r)Q(s+r)T_0(s+r,s)X_0)\\
        &= T(t,s+r)(T_0(s+r,s)X_0-Q(s+r)T_0(s+r,s)X_0)\\
        &=T(t,s+r)P(s+r)T_0(s+r,s)X_0,
    \end{align*}
    and then, the inequality (iii) in Definition \ref{mu-D} and \eqref{BG} imply that
    \begin{align*}
        \|T_0(t,s)P_0(s)\| &\leq K\left(\frac{\mu(t)}{\mu(s+r)}\right)^{-\alpha}\mu(s+r)^{\textnormal{sgn}(s+r)\theta}\tilde{K}\left(\frac{\mu(s+r)}{\mu(s)}\right)^{a}\mu(s)^{\textnormal{sgn}(s)\varepsilon}\\
        &\leq K\tilde{K}N^{a+\alpha}\left(\frac{\mu(t)}{\mu(s)}\right)^{-\alpha}\left(\frac{\mu(s+r)}{\mu(s)}\right)^{\textnormal{sgn}(s+r)\theta}\mu(s)^{\textnormal{sgn}(s)\varepsilon+\textnormal{sgn}(s+r)\nu}\\
        &\leq K\tilde{K}N^{a+\alpha+\theta}\left(\frac{\mu(t)}{\mu(s)}\right)^{-\alpha}\mu(s)^{\textnormal{sgn}(s)\varepsilon+\textnormal{sgn}(s+r)\nu}.
    \end{align*}

    As before, we will analyze three cases for the last inequality. First, if $s\geq0$ it can be seen immediately that

    \begin{equation}\label{P1-eq4}
        \|T_0(t,s)P_0(s)\| \leq K\tilde{K}N^{a+\alpha+\theta}\left(\frac{\mu(t)}{\mu(s)}\right)^{-\alpha}\mu(s)^{\textnormal{sgn}(s)(\theta+\varepsilon)}.
    \end{equation}

    Next, if $-r\leq s\leq0$ we have
    \begin{align*}
       \|T_0(t,s)P_0(s)\| &\leq  K\tilde{K}N^{a+\alpha+\theta}\left(\frac{\mu(t)}{\mu(s)}\right)^{-\alpha} \mu(s)^{\theta-\varepsilon}\\
       &\leq K\tilde{K}N^{a+\alpha+\theta}\left(\frac{\mu(t)}{\mu(s)}\right)^{-\alpha} \mu(s)^{-\theta-\varepsilon},
    \end{align*}
    which implies that
    \begin{equation}\label{P1-eq5}
        \|T_0(t,s)P_0(s)\|\leq K\tilde{K}N^{a+\alpha+\theta}\left(\frac{\mu(t)}{\mu(s)}\right)^{-\alpha}\mu(s)^{\textnormal{sgn}(s)(\theta+\varepsilon)}.
    \end{equation}

    When $s\leq-r<0$, proceeding as in the previous cases, it is possible to prove that
    \begin{equation}\label{P1-eq6}
        \|T_0(t,s)P_0(s)\|\leq K\tilde{K}N^{a+\alpha+\varepsilon}\left(\frac{\mu(t)}{\mu(s)}\right)^{-\alpha}\mu(s)^{\textnormal{sgn}(s)(\theta+\varepsilon)}.
    \end{equation}
    Therefore, from \eqref{P1-eq4}, \eqref{P1-eq5} and \eqref{P1-eq6} we obtain that for $t\geq s+r$:

    \begin{equation}\label{P1-eq7}
        \|T_0(t,s)P_0(s)\|\leq K\tilde{K}N^{a+\alpha+\theta}\left(\frac{\mu(t)}{\mu(s)}\right)^{-\alpha}\mu(s)^{\textnormal{sgn}(s)(\theta+\varepsilon)}.
    \end{equation}

    Finally, the inequality \eqref{B1} follows from \eqref{P1-eq3} and \eqref{P1-eq7}. Combined with condition \eqref{hyp:mu-dichotomy-alpha}, it yields the expected relation between the nonuniformity bound and the growth rate, thereby completing the proof.

\end{proof}

 \begin{remark}
We emphasize that the condition \eqref{hyp:mu-dichotomy-theta} appears explicitly at various points in the proof. The other two inequalities, \eqref{hyp:mu-dichotomy-alpha} and \eqref{hyp:mu-dichotomy-beta}, while not used directly, play a structural role in ensuring the compatibility between the nonuniformity bounds and the decay/growth rates specified in Definition \ref{mu-D}.
\end{remark}

\section{Topological equivalence}

In this section we establish a partial version of the Hartman-Grobman theorem for a type of perturbations of a nonuniform $\mu$--dichotomy. For this, we will consider the following assumptions:

\begin{enumerate}
    \item[(H1)] The equation \eqref{eq-lineal} has nonuniform $\mu$--dichotomy on $\mathbb{R}$ with a differentiable growth rate $\mu$ satisfying \textbf{(H)}, the linear operator
 $T_0(t,s)$ satisfies  \eqref{BG}  and  the parameters $\alpha$, $\beta$, $\theta$, $\nu$ and $\varepsilon$ fulfill \eqref{hyp:mu-dichotomy}.

    \item[(H2)] There exist constants $\delta>0$ and $\gamma>\max\{\theta,\nu\}$ such that
    \begin{equation}\label{g-cond}
        |g(t,\phi)-g(t,\psi)|\leq \delta\min\{1,\|\phi-\psi\|\}\mu'(t)\mu(t)^{-\textnormal{sgn}(t)(\gamma+\varepsilon)-1},
    \end{equation}
    for every $t\in\mathbb{R}$ and $\phi, \psi\in C$.
\end{enumerate}
Inspired by the approach developed in \cite{B-v-per}, we denote by $\mathcal{M}$ the set of all bounded continuous functions
\begin{equation*}
   \eta\colon\{(t,b) : t\in\mathbb{R}, b\in U(t)\}\to C, 
\end{equation*}
equipped with the norm
\begin{equation}\label{eta-norm}
    \|\eta\|_{\infty}:=\sup\{\|\eta(t,b)\| : t\in\mathbb{R}, b\in U(t)\}<+\infty,
\end{equation}
where $U(t)$ is the unstable subspace of $T(t,s)$ at time $t$. It can easily shown that $\mathcal{M}$ is a Banach space with the norm $\|\cdot\|_{\infty}$ defined in  \eqref{eta-norm}. We also write
$$\eta^t=\eta(t,\cdot) \quad \textnormal{ and } \quad h^t=Id_{U(t)} + \eta^t,$$
and as in \cite{B-v-per}, the solutions $x$ of \eqref{eq-per} in the evolutionary form:
\begin{equation*}
    x_t=R(t,s)(x_s) \quad \textnormal{ for } t\geq s.
\end{equation*}

\begin{theorem}\label{Th-Equiv}
    Assume that the conditions (H1) and (H2) hold. If $\delta$ is sufficiently small, then there exists a unique $\eta\in\mathcal{M}$ such that $h^{t}=Id_{U(t)}+\eta^t$ and 
    \begin{equation}\label{conj}
        h^t\circ T(t,s) = R(t,s)\circ h^s \quad \textnormal{ on } U(s),
    \end{equation}
    for every $t, s\in\mathbb{R}$ with $t\geq s$. Moreover, each map $h^t$ is a homeomorphism.
\end{theorem}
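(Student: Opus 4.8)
The plan is to realize $\eta$ as the unique fixed point of a contraction on $\mathcal{M}$ and then upgrade $h^{t}=\mathrm{Id}_{U(t)}+\eta^{t}$ to a homeomorphism. Writing the solutions of \eqref{eq-per} via the variation of constants formula and splitting them with the projections $P_{0}(\tau),Q_{0}(\tau)$, one checks, as in \cite{B-v-per}, that \eqref{conj} holding for all $t\ge s$ is equivalent to $F\eta=\eta$, where $F\colon\mathcal{M}\to\mathcal{M}$ is defined, for $t\in\mathbb{R}$ and $b\in U(t)$, by
$$(F\eta)(t,b)=\int_{-\infty}^{t}T_{0}(t,\tau)P_{0}(\tau)\,g\bigl(\tau,\pi_{\tau}+\eta(\tau,\pi_{\tau})\bigr)\,d\tau-\int_{t}^{+\infty}\overline{T}(t,\tau)Q_{0}(\tau)\,g\bigl(\tau,\pi_{\tau}+\eta(\tau,\pi_{\tau})\bigr)\,d\tau,$$
with $\pi_{\tau}:=T(\tau,t)b\in U(\tau)$ the linear unstable orbit through $(t,b)$, defined for every $\tau\in\mathbb{R}$ by part~(ii) of Definition~\ref{mu-D}. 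To show $F$ is well defined I would bound $|g(\tau,\varphi)|\le\delta\,\mu'(\tau)\mu(\tau)^{-\mathrm{sgn}(\tau)(\gamma+\varepsilon)-1}$ uniformly in $\varphi$ (from $g(t,0)=0$ and $\min\{1,\|\cdot\|\}\le1$ in \eqref{g-cond}), multiply by the bounds \eqref{B1}--\eqref{B2} and property~\textbf{(H)}, split each improper integral at $\tau=0$ and $\tau=t$, and verify that $\alpha>\theta$ and $\beta>\nu$ (both forced by \eqref{hyp:mu-dichotomy}) together with $\gamma>\max\{\theta,\nu\}$ make every fragment converge, with total bound $D\delta M$ for a structural constant $M$; continuity of $F\eta$ comes from continuity of $g$, $T$ and dominated convergence, and $(F\eta)(t,b)\in C$ (not merely $C_{0}$) from the smoothing of the variation of constants integral. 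Replaying the estimate on $(F\eta_{1}-F\eta_{2})(t,b)$, now with $\min\{1,\|\cdot\|\}\le\|\cdot\|$, gives $\|F\eta_{1}-F\eta_{2}\|_{\infty}\le D\delta M\,\|\eta_{1}-\eta_{2}\|_{\infty}$, so for $\delta<1/(DM)$ the map $F$ is a contraction and Banach's fixed point theorem yields a unique $\eta\in\mathcal{M}$.

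To see this $\eta$ is a conjugacy, fix $s$, $b\in U(s)$ and set $w(\tau):=\eta(\tau,T(\tau,s)b)$; writing $\rho_{\tau}:=T(\tau,s)b$, the orbit $\pi_{\sigma}$ through $(\tau,\rho_{\tau})$ equals $\rho_{\sigma}$, so $F\eta=\eta$ reads
$$w(\tau)=\int_{-\infty}^{\tau}T_{0}(\tau,\sigma)P_{0}(\sigma)G(\sigma)\,d\sigma-\int_{\tau}^{+\infty}\overline{T}(\tau,\sigma)Q_{0}(\sigma)G(\sigma)\,d\sigma,\qquad G(\sigma):=g\bigl(\sigma,\rho_{\sigma}+w(\sigma)\bigr).$$
Applying $T(t,s)$ to this identity at $\tau=s$, subtracting from the identity at $\tau=t$, and using the cocycle property of $T_{0}$ and of the isomorphisms $\overline{T}$, the commutation of $P_{0},Q_{0}$, and $X_{0}=P_{0}(\sigma)+Q_{0}(\sigma)$, the two improper integrals telescope to $w(t)=T(t,s)w(s)+\int_{s}^{t}T_{0}(t,\sigma)X_{0}G(\sigma)\,d\sigma$ for $t\ge s$. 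Hence $\xi(\tau):=\rho_{\tau}+w(\tau)=T(\tau,s)b+w(\tau)$ solves the variation of constants equation of \eqref{eq-per} with $\xi(s)=b+\eta(s,b)=h^{s}(b)$, so $\xi(t)=R(t,s)(h^{s}(b))$; since also $\xi(t)=T(t,s)b+\eta(t,T(t,s)b)=h^{t}(T(t,s)b)$, this is exactly \eqref{conj}. Uniqueness of the fixed point gives uniqueness of $\eta$.

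It remains to prove that each $h^{t}$ is a homeomorphism onto its image. Continuity is clear since $\eta^{t}$ is continuous. For injectivity, if $h^{t}(b_{1})=h^{t}(b_{2})$ then applying $R(\tau,t)$ and \eqref{conj} gives $T(\tau,t)(b_{1}-b_{2})=\eta(\tau,T(\tau,t)b_{2})-\eta(\tau,T(\tau,t)b_{1})$ for all $\tau\ge t$, so $\|T(\tau,t)(b_{1}-b_{2})\|\le2\|\eta\|_{\infty}$; inverting on the unstable subspace and using part~(iv) of Definition~\ref{mu-D}, $\|b_{1}-b_{2}\|\le2K\|\eta\|_{\infty}\bigl(\mu(t)/\mu(\tau)\bigr)^{\beta}\mu(\tau)^{\mathrm{sgn}(\tau)\nu}\to0$ as $\tau\to+\infty$ (because $\beta>\nu$), whence $b_{1}=b_{2}$. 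Finally $\|h^{t}(b)\|\ge\|b\|-\|\eta\|_{\infty}$, so preimages under $h^{t}$ of bounded sets are bounded; since the unstable space $U(t)$ of a linear delay equation is finite dimensional (the solution operator being compact for time lags $\ge r$), $h^{t}$ is proper, and a continuous proper injection is a homeomorphism onto its closed image. Alternatively, one solves the dual contraction problem for the reverse conjugacy $\bar{h}^{t}\circ R(t,s)=T(t,s)\circ\bar{h}^{s}$ and identifies $\bar{h}^{t}$ with $(h^{t})^{-1}$ through the uniqueness already established.

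The step I expect to be the main obstacle is the estimate showing that $F$ is a well-defined contraction: because the $\mu$-dichotomy is nonuniform and $\mu$ is an arbitrary growth rate, the exponents in \eqref{B1}--\eqref{B2} and \eqref{g-cond} carry the factors $\mathrm{sgn}(\tau)$, so one must split the improper integrals at $0$ and at $t$ and check, case by case according to the sign of $t$, that the precise interplay between \eqref{hyp:mu-dichotomy}, the inequality $\gamma>\max\{\theta,\nu\}$ and \textbf{(H)} makes every fragment integrable with a bound independent of $\eta$ — this is where every hypothesis of the theorem is consumed. The secondary technical points are the membership $(F\eta)(t,b)\in C$ and the operator-algebra identities (commutation of $P_{0},Q_{0}$ with $T,T_{0},\overline{T}$, and $X_{0}=P_{0}+Q_{0}$) underlying the telescoping in the second step.
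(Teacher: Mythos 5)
Your proposal follows the paper's own plan for the core of the argument: the same operator $F$ on the same Banach space $\mathcal{M}$, the same bounds \eqref{B1}--\eqref{B2}, Banach's fixed point theorem for $\delta$ small, and the equivalence of $F\eta=\eta$ with \eqref{conj}, which the paper simply cites from \cite[Lemma~2]{B-v-per} and you re-derive via the variation-of-constants / telescoping identity. Two points of divergence are worth noting. First, you anticipate that proving $F$ is a well-defined contraction requires splitting the improper integrals at $\tau=0$ and $\tau=t$ with a sign-by-sign case analysis; in fact the paper avoids this entirely in the present theorem: since $\gamma>\max\{\theta,\nu\}$, the combined exponent yields $\mu(\tau)^{\mathrm{sgn}(\tau)(\theta-\gamma)}\le 1$ (and likewise with $\nu$), so the integrals collapse to $\int_{-\infty}^{t}\mu(\tau)^{\alpha-1}\mu'(\tau)\,d\tau=\mu(t)^{\alpha}/\alpha$ and $\int_{t}^{+\infty}\mu(\tau)^{-\beta-1}\mu'(\tau)\,d\tau=\mu(t)^{-\beta}/\beta$, giving the explicit constant $D\delta(\alpha+\beta)/(\alpha\beta)$; the sign-splitting you describe only becomes necessary in the $C^1$ estimates of Lemmas~\ref{well defined} and~\ref{lemma:contraction}. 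Second, the paper's Step~3 is a one-line claim that $\|\eta\|_{\infty}<1$ together with $h^{t}=\mathrm{Id}_{U(t)}+\eta^{t}$ yields a homeomorphism; for a merely bounded (non-Lipschitz-small) nonlinear perturbation this is not by itself conclusive, and your argument---injectivity via the decay of $\overline{T}(t,\tau)$ on the unstable fibre as $\tau\to+\infty$ (using $\beta>\nu$), properness from finite-dimensionality of $U(t)$ (which rests on eventual compactness of the delayed solution operator), and the alternative route through the dual contraction for the reverse conjugacy combined with uniqueness---supplies the rigor that the paper implicitly delegates to \cite{B-v-per}. Overall the proposal is correct and, on the homeomorphism step, more careful than the paper's text.
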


\begin{proof}
The proof proceeds in three steps. First, we show that the operator $F$ is well defined. Second, we prove that $F$ is a contraction. Finally, we verify that each map $h^t$ is a homeomorphism.

\textit{Step 1. $F$ is well defined.} We define the operator $F\colon\mathcal{M}\to \mathcal{M}$ by
    \begin{equation}\label{F}
        \begin{array}{rcl}
            F(\eta)(t,b) & = & \displaystyle\int_{-\infty}^{t} T_0(t,\tau)P_0(\tau)g(\tau,\overline{T}(\tau,t)b+\eta(\tau,\overline{T}(\tau,t)b))d\tau \\
             & & -\displaystyle\int_{t}^{+\infty}\overline{T}(t,\tau)Q_0(\tau)g(\tau, T(\tau,t)b+\eta(\tau,T(\tau,t)b))d\tau
        \end{array}
    \end{equation}
    for $\eta\in\mathcal{M}$, $t\in\mathbb{R}$ and $b\in U(t)$.

    Since $g(t,0)=0$ for all $t\in \mathbb{R}$, it follows from \eqref{B1}, \eqref{B2} and \eqref{g-cond} that
\begin{align*}
        \int_{-\infty}^{t}\|T_0(t,\tau)P_0(\tau)\|\cdot|g(\tau,\overline{T}(\tau,t)b&+\eta(\tau,\overline{T}(\tau,t)b))|d\tau\\
        &\leq D\delta\mu(t)^{-\alpha}\int_{-\infty}^{t} \mu(\tau)^{\alpha-1}\mu'(\tau)\mu(\tau)^{\textnormal{sgn}(\tau)(\theta-\gamma)}d\tau\\
        &\leq \frac{D\delta}{\alpha},
    \end{align*}
    and
\begin{align*}
\int_{t}^{+\infty}\|\overline{T}(t,\tau)Q_0(\tau)\|\cdot|g(\tau,T(\tau,t)b&+\eta(\tau,T(\tau,t)b))|d\tau\\
        &\leq D\delta\mu(t)^{\beta}\int_{t}^{+\infty} \mu(\tau)^{-\beta-1}\mu'(\tau)\mu(\tau)^{\textnormal{sgn}(\tau)(\nu-\gamma)}d\tau\\
        &\leq \frac{D\delta}{\beta}.
    \end{align*}
    Hence,
    \begin{equation*}
\|F(\eta)\|_{\infty}\leq D\delta\left(\frac{\alpha+\beta}{\alpha\beta}\right),
    \end{equation*}
    which shows that the operator $F$ is well defined. 

\textit{Step 2. $F$ is a contraction.} In order to prove that there exists a unique $\eta\in\mathcal{M}$ such that $F(\eta)=\eta$ we will apply Banach's fixed point theorem. For this, observe that we just need to show that the operator $F$ is a contraction. In fact, for each $\eta, \xi\in\mathcal{M}$ we have
\begin{equation*}
        \begin{split}
        F(\eta)(t,b) &- F(\xi)(t,b)\\
        =& \int_{-\infty}^{t}T_0(t,\tau)P_0(\tau)[g(\tau,\overline{T}(\tau,t)b+\eta(\tau,\overline{T}(\tau,t)b))\\
        &- g(\tau,\overline{T}(\tau,t)b+\xi(\tau,\overline{T}(\tau,t)b))]d\tau\\
        &- \int_{t}^{+\infty}\overline{T}(t,\tau)Q_0(\tau)[g(\tau,T(\tau,t)b+\eta(\tau,T(\tau,t)b))\\
        &-g(\tau,T(\tau,t)b+\xi(\tau,T(\tau,t)b))d\tau.
    \end{split}
    \end{equation*}

    From \eqref{B1}, \eqref{B2} and \eqref{g-cond} we obtain that
\begin{equation*}
    \begin{split}
        \int_{-\infty}^{t}&\|T_0(t,\tau)P_0(\tau)\|\cdot|g(\tau,\overline{T}(\tau,t)b+\eta(\tau,\overline{T}(\tau,t)b))- g(\tau,\overline{T}(\tau,t)b+\xi(\tau,\overline{T}(\tau,t)b))|d\tau\\
        \leq & D\delta\mu(t)^{-\alpha}\int_{-\infty}^{t}\mu(\tau)^{\alpha-1}\mu'(\tau)\|\eta(\tau,\overline{T}(\tau,t)b) - \xi(\tau,\overline{T}(\tau,t)b)\|\mu(\tau)^{\textnormal{sgn}(\tau)(\theta-\gamma)}d\tau\\
        \leq & D\delta\|\eta-\xi\|_{\infty}\mu(t)^{-\alpha}\int_{-\infty}^{t}\mu(\tau)^{\alpha-1}\mu'(\tau)d\tau\\
        =& \frac{D\delta}{\alpha}\|\eta-\xi\|_{\infty},
    \end{split}
\end{equation*}

and

\begin{equation*}
    \begin{split}
        \int_{t}^{+\infty}&\overline{T}(t,\tau)Q_0(\tau)[g(\tau,T(\tau,t)b+\eta(\tau,T(\tau,t)b))
        -g(\tau,T(\tau,t)b+\xi(\tau,T(\tau,t)b))d\tau\\
        \leq & D\delta\mu(t)^{\beta}\int_{t}^{+\infty}\mu(\tau)^{-\beta-1}\mu'(\tau)\|\eta(\tau,T(\tau,t)b) - \xi(\tau,T(\tau,t)b)\|\mu(\tau)^{\textnormal{sgn}(\tau)(\nu-\gamma)}d\tau\\
        \leq & D\delta\|\eta-\xi\|_{\infty}\mu(t)^{\beta}\int_{t}^{+\infty}\mu(\tau)^{-\beta-1}\mu'(\tau)d\tau\\
        =& \frac{D\delta}{\beta}\|\eta-\xi\|_{\infty}.
    \end{split}
\end{equation*}

Thus,
\begin{equation*}
    \|F(\eta)-F(\xi)\|_{\infty}\leq D\delta\left(\frac{\alpha+\beta}{\alpha\beta}\right)\|\eta-\xi\|_{\infty}.
\end{equation*}

From the above inequality we can conclude that for $\delta$ sufficiently small the operator $F$ is a contraction and then there exists a unique $\eta\in\mathcal{M}$ satisfying $F(\eta)=\eta$.

On the other hand, in \cite[Lemma 2]{B-v-per} was proved that the property \eqref{conj} is holds if and only if $F(\eta)=\eta.$  
It thus remains to carry out the third step.

\textit{Step 3. $h^t$ is a homeomorphism for each $t \in \mathbb{R}.$} Indeed, since that $\|\eta\|_{\infty}=\|F(\eta)\|_{\infty}<1$  and $h^t=Id_{U(t)}+\eta^{t}$ we can deduce that each function $h^t$ is a homeomorphism. This conclude the proof of the theorem.

\end{proof}

\section{Differentiability}

In this section, we investigate the differentiability properties of the conjugacy established in Theorem~\ref{Th-Equiv}. As in the previous section, we work under assumption~(H1). In addition, we will now impose the following further assumptions:

\begin{enumerate}
    \item[(H3)] $g$ is a $C^1$--function with $D_2g(t,0)=0$.
    \item[(H4)] Let \(\delta > 0\) and \(\gamma > \max\{\theta, \nu\}\) be as in assumption~\textnormal{(H2)}. Then, there exist constants \(\lambda > 0\) and \(\xi\) satisfying
\[
\max\{\nu + \varepsilon,\; |a - \beta| + \varepsilon\} < \xi < \dfrac{\alpha + \beta}{2},
\]
such that
\begin{equation}\label{g-cond2}
    |g(t,\phi) - g(t,\psi)| \leq \delta \min\{1,\|\phi - \psi\|_{\mu}\} \, \mu'(t)\mu(t)^{-\textnormal{sgn}(t)(\gamma + \varepsilon) - 1}
\end{equation}
and
\begin{equation}\label{Dg-cond}
    |D_2g(t,\phi) - D_2g(t,\psi)| \leq \lambda \min\{1,\|\phi - \psi\|_{\mu}\} \, \mu'(t)\mu(t)^{-\textnormal{sgn}(t)(\gamma + \varepsilon) - 2\textnormal{sgn}(t)\xi - 1},
\end{equation}
for every \(t \in \mathbb{R}\) and \(\phi, \psi \in C\), where \(\|\cdot\|_{\mu} := \|\cdot\|\mu(t)^{-\textnormal{sgn}(t)(\xi + \varepsilon)}\).

\end{enumerate}

In order to formulate our main result in this section, we denote by $\mathcal{M}^1$ the set of all functions
\begin{equation*}
   \eta\colon\{(t,b) : t\in\mathbb{R}, b\in U(t)\}\to C, 
\end{equation*}
that are strongly continuous in $t$, $C^1$ in $b$, equipped with the norm
\begin{equation}\label{eta-norm1}
    \|\eta\|_{1,\mu} = \|\eta\|_{\infty,\mu} + \left\|\frac{\partial \eta}{\partial b}\right\|_{\infty,\mu}<+\infty,
\end{equation}
where
\begin{equation*}
    \|\eta\|_{\infty,\mu}:=\sup\{\|\eta(t,b)\|_{\mu} : t\in\mathbb{R}, b\in U(t)\}.
\end{equation*}
 $\mathcal{M}^1$ is a Banach space with the norm $\|\cdot\|_{1,\mu}$ defined in  \eqref{eta-norm1}. Moreover, for $q>0$, we define  the following closed neighborhood

 \begin{equation*}
     \mathcal{B}=\overline{B}(0,q):=\{\eta\in\mathcal{M}^1 : \|\eta\|_{1,\mu}\leq q\}.
 \end{equation*}

Before stating the main result, we present two preparatory lemmas that are essential for its proof. The first lemma shows that the operator \(F\), defined in \eqref{F} and now considered on the space \(\mathcal{B}\), is well defined. The second establishes that \(F\) is a contraction on \(\mathcal{B}\).

\begin{lemma}\label{well defined}
     Assume that conditions \textnormal{(H1)}, \textnormal{(H3)}, and \textnormal{(H4)} hold, and let \(q > 0\). If the parameters \(\delta\) and \(\lambda\) satisfy
\begin{align}
\delta &\leq \frac{q\alpha\beta}{D(\alpha+\beta)(1+q)^2}, \label{ineq-delta} \\
\lambda &\leq \frac{q^2(\alpha+\beta - 2\xi)\left[(\xi - \varepsilon)^2 - (a - \beta)^2\right]}{D\left[D\left((\xi - \varepsilon)^2 - (a - \beta)^2\right) + 4\xi\tilde{K}(\alpha + \beta - 2\xi)\right](1 + q)^3}, \label{ineq-lambda}
\end{align}
where \(D\) is the constant appearing in inequalities \eqref{B1} and \eqref{B2}, then the operator \(F \colon \mathcal{B} \to \mathcal{B}\), as given in \eqref{F}, is well defined.
\end{lemma}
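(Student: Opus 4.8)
The plan is to show that $F$ maps $\mathcal B$ into itself by controlling two quantities separately: the sup-norm $\|F(\eta)\|_{\infty,\mu}$ and the sup-norm of the $b$-derivative $\|\partial F(\eta)/\partial b\|_{\infty,\mu}$. For the first, I would proceed exactly as in Step 1 of the proof of Theorem~\ref{Th-Equiv}, but now measuring everything in the weighted norm $\|\cdot\|_\mu$. Since $g(\tau,0)=0$, the bound \eqref{g-cond2} gives $|g(\tau,\overline T(\tau,t)b+\eta(\tau,\overline T(\tau,t)b))|\le \delta\min\{1,\|\overline T(\tau,t)b+\eta(\cdots)\|_\mu\}\,\mu'(\tau)\mu(\tau)^{-\mathrm{sgn}(\tau)(\gamma+\varepsilon)-1}$, and on $\mathcal B$ we have $\|\eta\|_{\infty,\mu}\le q$, so the relevant factor is $\le(1+q)$ after inserting the dichotomy bounds \eqref{B1}, \eqref{B2}; the $\mu$-weight on $g$ combines with the weight $\theta$ or $\nu$ from the dichotomy bound and is dominated because $\gamma>\max\{\theta,\nu\}$, making the integrals $\int_{-\infty}^t \mu(\tau)^{\alpha-1}\mu'(\tau)\,d\tau$ and $\int_t^{+\infty}\mu(\tau)^{-\beta-1}\mu'(\tau)\,d\tau$ converge and yield $\mu(t)^\alpha/\alpha$ and $\mu(t)^{-\beta}/\beta$ respectively. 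The upshot is $\|F(\eta)\|_{\infty,\mu}\le D\delta(1+q)\frac{\alpha+\beta}{\alpha\beta}$, and condition \eqref{ineq-delta} — which is slightly stronger, carrying a $(1+q)^2$ — forces this to be $\le q$ (indeed $\le q/(1+q)\le q$).

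The harder half is the derivative estimate. I would differentiate $F(\eta)(t,b)$ in $b$ under the integral sign; each integrand contributes a term of the form $T_0(t,\tau)P_0(\tau)D_2g(\tau,\cdot)\big[\overline T(\tau,t)+\tfrac{\partial\eta}{\partial b}(\tau,\overline T(\tau,t)b)\,\overline T(\tau,t)\big]$ and similarly for the unstable part with $T(\tau,t)$ in place of $\overline T(\tau,t)$. Justifying differentiation under the integral requires a uniform-convergence / dominated-convergence argument, and here one must be careful: the factors $\overline T(\tau,t)$ and $T(\tau,t)$ grow, so the $\mu$-weights must be bookkept precisely. This is exactly the role of the exponent $\xi$ and the constraint $\max\{\nu+\varepsilon,\,|a-\beta|+\varepsilon\}<\xi<\frac{\alpha+\beta}{2}$: the weighted norm $\|\cdot\|_\mu=\|\cdot\|\mu(t)^{-\mathrm{sgn}(t)(\xi+\varepsilon)}$ is designed so that $\overline T(\tau,t)b$ and $T(\tau,t)b$ have controlled $\|\cdot\|_\mu$-size, and the extra weight $\mu(\tau)^{-2\mathrm{sgn}(\tau)\xi}$ in \eqref{Dg-cond} compensates the two growth factors (one from the semigroup composed inside $D_2g$'s argument's $\mu$-norm, one from the explicit $\overline T(\tau,t)$ factor). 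Since $D_2g(\tau,0)=0$ by (H3), $|D_2g(\tau,\cdot)|\le\lambda\min\{1,\|\cdots\|_\mu\}\cdot(\text{weight})$ and on $\mathcal B$ this is $\le\lambda(1+q)\cdot(\text{weight})$; the bracketed factor $[\,\mathrm{id}+\partial\eta/\partial b\,]$ contributes another $(1+q)$, and together with the $\overline T(\tau,t)$ factor that contributes a third $(1+q)$-type bound, one gets an overall $(1+q)^3$, matching the shape of \eqref{ineq-lambda}. The convergence of the resulting integrals — $\int_{-\infty}^t\mu(\tau)^{\alpha-1-2\mathrm{sgn}(\tau)\xi}\mu'(\tau)\,d\tau$ type on the stable side, and correspondingly on the unstable side, together with the $|a-\beta|$ contribution from the $\overline T$/$T$ bounds — is precisely what forces $\xi<\frac{\alpha+\beta}{2}$ (to split $\alpha+\beta$ symmetrically past $2\xi$) and $\xi>|a-\beta|+\varepsilon$ (so the growth-rate factor is absorbed); these produce the denominator factor $(\xi-\varepsilon)^2-(a-\beta)^2$ and the $(\alpha+\beta-2\xi)$ in \eqref{ineq-lambda}.

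Concretely, the steps are: (1) verify strong continuity in $t$ and $C^1$ dependence in $b$ of $F(\eta)$ by the dominated-convergence argument above, so $F(\eta)\in\mathcal M^1$; (2) estimate $\|F(\eta)\|_{\infty,\mu}\le q/(1+q)$ using \eqref{B1}, \eqref{B2}, \eqref{g-cond2}, $\gamma>\max\{\theta,\nu\}$, and \eqref{ineq-delta}; (3) estimate $\|\partial F(\eta)/\partial b\|_{\infty,\mu}$ by combining \eqref{B1}, \eqref{B2}, the semigroup bounds (iii)--(iv) of Definition~\ref{mu-D} and \eqref{BG} for the $\overline T(\tau,t)$, $T(\tau,t)$ factors, \eqref{Dg-cond}, the constraints on $\xi$, and \eqref{ineq-lambda}, to get this term $\le q\cdot q/(1+q)$ or some fraction summing with (2) to at most $q$; (4) add the two bounds to conclude $\|F(\eta)\|_{1,\mu}\le q$, i.e. $F(\mathcal B)\subseteq\mathcal B$.

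I expect the main obstacle to be the bookkeeping in step (3): keeping the $\mathrm{sgn}(\tau)$-dependent $\mu$-exponents straight across the three regimes $\tau\le s\le 0$, $\tau\le 0\le s$ (resp.\ $t$), $0\le\tau$ — exactly the kind of case split already carried out in the proof of the first Lemma — while simultaneously tracking the $\overline T(\tau,t)$/$T(\tau,t)$ growth factor and the compensating weight in \eqref{Dg-cond}, and verifying that all the resulting integrals converge under precisely the stated inequalities on $\xi$. The algebra leading from the raw integral bounds to the clean fractions in \eqref{ineq-delta} and \eqref{ineq-lambda} is routine once the exponents are pinned down, but pinning them down correctly in every case is where care is needed.
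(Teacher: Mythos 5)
Your plan coincides with the paper's proof: estimate $\|F(\eta)\|_{\infty,\mu}$ and $\|\partial F(\eta)/\partial b\|_{\infty,\mu}$ separately by inserting \eqref{B1}, \eqref{B2}, \eqref{BG} for the factors $\overline{T}(\tau,t)$, $T(\tau,t)$, the Lipschitz bounds \eqref{g-cond2}, \eqref{Dg-cond}, and the $\xi$-weight, split by the sign of $\tau$ (and $t$), and then invoke \eqref{ineq-delta}, \eqref{ineq-lambda} to conclude $\|F(\eta)\|_{1,\mu}<q/(1+q)<q$. One small correction to your $(1+q)$-accounting: the paper simply takes $\min\{1,\cdot\}\le 1$ in both \eqref{g-cond2} and \eqref{Dg-cond}, so $\|F(\eta)\|_{\infty,\mu}\le D\delta(\alpha+\beta)/(\alpha\beta)$ carries no $(1+q)$ at all, and the derivative bound carries only a single $(1+q)$ (from summing the direct terms $I_1,I_3$ with the terms $I_2,I_4$ that each pick up one factor $\|\partial\eta/\partial b\|_{\infty,\mu}\le q$), not $(1+q)^3$; the $(1+q)^2$ and $(1+q)^3$ are built into \eqref{ineq-delta} and \eqref{ineq-lambda} precisely so that the two contributions become $q/(1+q)^2$ and $q^2/(1+q)^2$ and sum to $q/(1+q)$.
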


\begin{proof}
    We define the operator $F\colon\mathcal{B}\to \mathcal{B}$ by
    \begin{equation*}
        \begin{array}{rcl}
            F(\eta)(t,b) & = & \displaystyle\int_{-\infty}^{t} T_0(t,\tau)P_0(\tau)g(\tau,\overline{T}(\tau,t)b+\eta(\tau,\overline{T}(\tau,t)b))d\tau \\
             & & -\displaystyle\int_{t}^{+\infty}\overline{T}(t,\tau)Q_0(\tau)g(\tau, T(\tau,t)b+\eta(\tau,T(\tau,t)b))d\tau
        \end{array}
    \end{equation*}
    for $\eta\in\mathcal{B}$, $t\in\mathbb{R}$ and $b\in U(t)$.

    From \eqref{g-cond2} considering $|g(t,\phi)|\leq \delta\mu'(t)\mu(t)^{-\textnormal{sgn}(t)(\gamma+\varepsilon)-1}$  and Step 1 in the proof of Theorem \ref{Th-Equiv}, it follows that
    $$\|F(\eta)(t,b)\|\mu(t)^{-\textnormal{sgn}(t)(\xi+\varepsilon)}\leq D\delta\left(\frac{\alpha+\beta}{\alpha\beta}\right)\mu(t)^{-\textnormal{sgn}(t)(\xi+\varepsilon)}\leq D\delta\left(\frac{\alpha+\beta}{\alpha\beta}\right),$$
    and so
    \begin{equation}\label{F-cota-mu}
        \|F(\eta)\|_{\infty,\mu} \leq D\delta\left(\frac{\alpha+\beta}{\alpha\beta}\right).
    \end{equation}

    On the other hand, we have 
$$ \displaystyle{\frac{\partial F(\eta)}{\partial b}(t,b)}= I_1 + I_2 + I_3 + I_4,$$
where
$$I_1 = \displaystyle\int_{-\infty}^{t}T_0(t,\tau)P_0(\tau)D_2g(\tau,\overline{T}(\tau,t)b+\eta(\tau,\overline{T}(\tau,t)b))\overline{T}(\tau,t)d\tau;$$
$$I_2 =  \displaystyle\int_{-\infty}^{t}T_0(t,\tau)P_0(\tau)D_2g(\tau,\overline{T}(\tau,t)b+\eta(\tau,\overline{T}(\tau,t)b))\frac{\partial\eta}{\partial b}(\tau,\overline{T}(\tau,t)b)\overline{T}(\tau,t)d\tau ;$$
$$I_3 = -\displaystyle\int_{t}^{+\infty}\overline{T}(t,\tau)Q_0(\tau)D_2g(\tau,T(\tau,t)b+\eta(\tau,T(\tau,t)b))T(\tau,t)d\tau $$
and
$$I_4 = -\displaystyle\int_{t}^{+\infty}\overline{T}(t,\tau)Q_0(\tau)D_2g(\tau,T(\tau,t)b+\eta(\tau,T(\tau,t)b))\frac{\partial\eta}{\partial b}(\tau,T(\tau,t)b)T(\tau,t)d\tau.$$
    
We analyze each term individually, providing separate estimates for the cases \(t \geq 0\) and \(t < 0\). These bounds collectively guarantee that the operator \(F\) is well defined on the entire real line.

Considering $t\geq 0$, from \eqref{BG}, \eqref{B1}, \eqref{B2} and \eqref{Dg-cond} together
with the fact that $D_2g(t,0)=0$ and $b\in U(t)$, we obtain for $I_1:$
\begin{equation*}
    \begin{split}
        \mu&(t)^{-\textnormal{sgn}(t)(\xi+\varepsilon)}\int_{-\infty}^{t}\|T_0(t,\tau)P_0(\tau)\| |D_2g(\tau, \overline{T}(\tau,t)b+\eta(\tau,\overline{T}(\tau,t)b))| \|\overline{T}(\tau,t)\|d\tau\\
        &\leq D^2\lambda\mu(t)^{-\alpha-\beta-\textnormal{sgn}(t)(\xi-\nu)}\int_{-\infty}^{t}\mu(\tau)^{\alpha+\beta-2\textnormal{sgn}(\tau)(\xi)-1}\mu'(\tau)\mu(\tau)^{-\textnormal{sgn}(\tau)(\gamma-\theta)}d\tau\\
        &\leq D^2\lambda\mu(t)^{-\alpha-\beta-(\xi-\nu)}\int_{-\infty}^{t}\mu(\tau)^{\alpha+\beta-2\textnormal{sgn}(\tau)(\xi)-1}\mu'(\tau)d\tau\\
        &= D^2\lambda\mu(t)^{-\alpha-\beta-(\xi-\nu)}\Big[\int_{-\infty}^{0}\mu(\tau)^{(\alpha+\beta+2\xi)-1}\mu'(\tau)d\tau+\int_{0}^{t}\mu(\tau)^{(\alpha+\beta-2\xi)-1}\mu'(\tau)d\tau\Big]\\
        &= D^2\lambda\mu(t)^{-\alpha-\beta-(\xi-\nu)}\Big[\Big(\frac{1}{\alpha+\beta+2\xi}-\frac{1}{\alpha+\beta-2\xi}\Big)+\frac{\mu(t)^{\alpha+\beta-2\xi}}{\alpha+\beta-2\xi}\Big]\\
        &< \frac{D^2\lambda}{\alpha+\beta-2\xi}\mu(t)^{-3\xi+\nu}\\
        &\leq \frac{D^2\lambda}{\alpha+\beta-2\xi}.
    \end{split}
    \end{equation*}
    Let us now consider the term \(I_2\). Assuming \(t \geq 0\), we obtain:
\begin{equation*}
    \begin{split}
        \mu&(t)^{-\textnormal{sgn}(t)(\xi+\varepsilon)}\int_{-\infty}^{t}\|T_0(t,\tau)P_0(\tau)\| |D_2g(\tau, \overline{T}(\tau,t)b+\eta(\tau,\overline{T}(\tau,t)b))| \Big\|\frac{\partial\eta}{\partial b}(\tau,\overline{T}(\tau,t)b)\Big\|\|\overline{T}(\tau,t)\|d\tau\\
        &\leq D^2\lambda\mu(t)^{-\alpha-\beta-\textnormal{sgn}(t)(\xi-\nu)}\int_{-\infty}^{t}\mu(\tau)^{\alpha+\beta-2\textnormal{sgn}(\tau)(\xi)-1}\mu'(\tau)\mu(\tau)^{-\textnormal{sgn}(\tau)(\gamma-\theta)}\Big\|\frac{\partial\eta}{\partial b}\Big\|_{\infty,\mu}\mu(\tau)^{\textnormal{sgn}(\tau)(\xi+\varepsilon)}d\tau\\
        &\leq  D^2\lambda\Big\|\frac{\partial\eta}{\partial b}\Big\|_{\infty,\mu}\mu(t)^{-\alpha-\beta-(\xi-\nu)}\int_{-\infty}^{t}\mu(\tau)^{\alpha+\beta-\textnormal{sgn}(\tau)(\xi)+\textnormal{sgn}(\tau)(\varepsilon)-1}\mu'(\tau)d\tau\\
        &\leq D^2\lambda q \mu(t)^{-\alpha-\beta-(\xi-\nu)} \Big[\int_{-\infty}^{0}\mu(\tau)^{(\alpha+\beta+\xi-\varepsilon)-1}\mu'(\tau)d\tau + \int_{0}^{t}\mu(\tau)^{(\alpha+\beta-\xi+\varepsilon)-1}\mu'(\tau)d\tau\Big]\\
        &= D^2\lambda q \mu(t)^{-\alpha-\beta-(\xi-\nu)}\Big[\Big(\frac{1}{\alpha+\beta+\xi-\varepsilon}-\frac{1}{\alpha+\beta-\xi+\varepsilon}\Big)+\frac{\mu(t)^{\alpha+\beta-\xi+\varepsilon}}{\alpha+\beta-\xi+\varepsilon}\Big]\\
        &< \frac{D^2\lambda q}{\alpha+\beta-\xi+\varepsilon}\mu(t)^{-2\xi+\nu+\varepsilon}\\
        &\leq \frac{D^2\lambda q}{\alpha+\beta-\xi+\varepsilon}.
    \end{split}
\end{equation*}

The estimation of \(I_3\) follows a similar strategy. For \(t \geq 0\), we have:
\begin{equation*}
    \begin{split}
        \mu&(t)^{-\textnormal{sgn}(t)(\xi+\varepsilon)}\int_{t}^{+\infty}\|\overline{T}(t,\tau)Q_0(\tau)\| |D_2g(\tau,T(\tau,t)b+\eta(\tau,T(\tau,t)b))| \|T(\tau,t)\|d\tau\\
        &\leq D\lambda\tilde{K}\mu(t)^{\beta-a-\textnormal{sgn}(t)(\xi)}\int_{t}^{+\infty}\mu(\tau)^{a-\beta-2\textnormal{sgn}(\tau)(\xi)-1}\mu'(\tau)\mu(\tau)^{-\textnormal{sgn}(\tau)(\gamma-\nu)}d\tau\\
        &\leq D\lambda\tilde{K}\mu(t)^{\beta-a-\xi}\int_{t}^{+\infty}\mu(\tau)^{-(2\xi-a+\beta)-1}\mu'(\tau)d\tau\\
        &= \frac{D\lambda\tilde{K}}{2\xi-a+\beta}\mu(t)^{-3\xi}\\
        &\leq \frac{D\lambda\tilde{K}}{2\xi-a+\beta};
    \end{split}
\end{equation*}
and finally, we estimate \(I_4\). From the assumptions and previously derived bounds, it follows that:
\begin{equation*}
    \begin{split}
        \mu&(t)^{-\textnormal{sgn}(t)(\xi+\varepsilon)}\int_{t}^{+\infty}\|\overline{T}(t,\tau)Q_0(\tau)\| |D_2g(\tau,T(\tau,t)b+\eta(\tau,T(\tau,t)b))| \Big\|\frac{\partial\eta}{\partial b}(\tau,T(\tau,t)b)\Big\| \|T(\tau,t)\|d\tau\\
        &\leq D\lambda\tilde{K}\mu(t)^{\beta-a-\textnormal{sgn}(t)(\xi)}\int_{t}^{+\infty}\mu(\tau)^{a-\beta-2\textnormal{sgn}(\tau)(\xi)-1}\mu'(\tau)\mu(\tau)^{-\textnormal{sgn}(\tau)(\gamma-\nu)}\Big\|\frac{\partial\eta}{\partial b}\Big\|_{\infty,\mu}\mu(\tau)^{\textnormal{sgn}(\tau)(\xi+\varepsilon)}d\tau\\
        &\leq D\lambda\tilde{K}\Big\|\frac{\partial\eta}{\partial b}\Big\|_{\infty,\mu}\mu(t)^{\beta-a-\xi}\int_{t}^{+\infty}\mu(\tau)^{a-\beta-\textnormal{sgn}(\tau)(\xi)+\textnormal{sgn}(\tau)(\varepsilon)-1}\mu'(\tau)d\tau\\
        &\leq D\lambda\tilde{K} q\mu(t)^{\beta-a-\xi}\int_{t}^{+\infty}\mu(\tau)^{-(\xi-a+\beta-\varepsilon)-1}\mu'(\tau)d\tau\\
        &= \frac{D\lambda\tilde{K}q}{\xi-a+\beta-\varepsilon}\mu(t)^{-2\xi+\varepsilon}\\
        &\leq \frac{D\lambda\tilde{K}q}{\xi-a+\beta-\varepsilon}.
    \end{split}
\end{equation*}

Summarizing, for $t\geq0$, the above inequalities imply that
\begin{align}
    \left\|\frac{\partial F(\eta)}{\partial b}(t,b)\right\|_{\infty,\mu} &< \frac{D^2\lambda}{\alpha+\beta-2\xi} + \frac{D^2\lambda q}{\alpha+\beta-\xi+\varepsilon} + \frac{D\lambda\tilde{K}}{2\xi-a+\beta} + \frac{D\lambda\tilde{K}q}{\xi-a+\beta-\varepsilon}\nonumber\\
    &< \frac{D^2\lambda}{\alpha+\beta-2\xi}(1+q) + \frac{D\lambda\tilde{K}}{\xi-a+\beta-\varepsilon}(1+q).\label{Dftmayorcero}
\end{align}

Now, we consider the case when $t<0$. Since \eqref{BG}, \eqref{B1}, \eqref{B2} and \eqref{Dg-cond} together
with the fact that $D_2g(t,0)=0$ and $b\in U(t)$, we get for $I_1:$

\begin{equation*}
    \begin{split}
        \mu&(t)^{-\textnormal{sgn}(t)(\xi+\varepsilon)}\int_{-\infty}^{t}\|T_0(t,\tau)P_0(\tau)\| |D_2g(\tau, \overline{T}(\tau,t)b+\eta(\tau,\overline{T}(\tau,t)b))| \|\overline{T}(\tau,t)\|d\tau\\
        &\leq D^2\lambda\mu(t)^{-\alpha-\beta-\textnormal{sgn}(t)(\xi-\nu)}\int_{-\infty}^{t}\mu(\tau)^{\alpha+\beta-2\textnormal{sgn}(\tau)(\xi)-1}\mu'(\tau)\mu(\tau)^{-\textnormal{sgn}(\tau)(\gamma-\theta)}d\tau\\
        &\leq D^2\lambda\mu(t)^{-\alpha-\beta+\xi-\nu}\int_{-\infty}^{t}\mu(\tau)^{(\alpha+\beta+2\xi)-1}\mu'(\tau)d\tau\\
        &= \frac{D^2\lambda}{\alpha+\beta+2\xi}\mu(t)^{3\xi-\nu}\\
        &\leq \frac{D^2\lambda}{\alpha+\beta+2\xi}.
    \end{split}
\end{equation*}

We proceed with the term \(I_2\) in the case \(t < 0\). As in the previous estimates, we obtain:
\begin{equation*}
    \begin{split}
        \mu&(t)^{-\textnormal{sgn}(t)(\xi+\varepsilon)}\int_{-\infty}^{t}\|T_0(t,\tau)P_0(\tau)\| |D_2g(\tau, \overline{T}(\tau,t)b+\eta(\tau,\overline{T}(\tau,t)b))| \Big\|\frac{\partial\eta}{\partial b}(\tau,\overline{T}(\tau,t)b)\Big\|\|\overline{T}(\tau,t)\|d\tau\\
        &\leq D^2\lambda\mu(t)^{-\alpha-\beta-\textnormal{sgn}(t)(\xi-\nu)}\int_{-\infty}^{t}\mu(\tau)^{\alpha+\beta-2\textnormal{sgn}(\tau)(\xi)-1}\mu'(\tau)\mu(\tau)^{-\textnormal{sgn}(\tau)(\gamma-\theta)}\Big\|\frac{\partial\eta}{\partial b}\Big\|_{\infty,\mu}\mu(\tau)^{\textnormal{sgn}(\tau)(\xi+\varepsilon)}d\tau\\
        &\leq  D^2\lambda\Big\|\frac{\partial\eta}{\partial b}\Big\|_{\infty,\mu}\mu(t)^{-\alpha-\beta+\xi-\nu}\int_{-\infty}^{t}\mu(\tau)^{\alpha+\beta-\textnormal{sgn}(\tau)(\xi)+\textnormal{sgn}(\tau)(\varepsilon)-1}\mu'(\tau)d\tau\\
        &\leq D^2\lambda q \mu(t)^{-\alpha-\beta+\xi-\nu} \int_{-\infty}^{t}\mu(\tau)^{(\alpha+\beta+\xi-\varepsilon)-1}\mu'(\tau)d\tau\\
        &= \frac{D^2\lambda q}{\alpha+\beta+\xi-\varepsilon}\mu(t)^{2\xi-\nu-\varepsilon}\\
        &\leq \frac{D^2\lambda q}{\alpha+\beta+\xi-\varepsilon}.
    \end{split}
\end{equation*}

For \(t < 0\), similar reasoning yields the following bound for \(I_3\):
\begin{equation*}
    \begin{split}
        \mu&(t)^{-\textnormal{sgn}(t)(\xi+\varepsilon)}\int_{t}^{+\infty}\|\overline{T}(t,\tau)Q_0(\tau)\| |D_2g(\tau,T(\tau,t)b+\eta(\tau,T(\tau,t)b))| \|T(\tau,t)\|d\tau\\
        &\leq D\lambda\tilde{K}\mu(t)^{\beta-a-\textnormal{sgn}(t)(\xi)}\int_{t}^{+\infty}\mu(\tau)^{a-\beta-2\textnormal{sgn}(\tau)(\xi)-1}\mu'(\tau)\mu(\tau)^{-\textnormal{sgn}(\tau)(\gamma-\nu)}d\tau\\
        &\leq D\lambda\tilde{K}\mu(t)^{\beta-a+\xi}\int_{t}^{+\infty}\mu(\tau)^{a-\beta-2\textnormal{sgn}(\tau)(\xi)-1}\mu'(\tau)d\tau\\
        &= D\lambda\tilde{K}\mu(t)^{\beta-a+\xi}\Big[\int_{t}^{0}\mu(\tau)^{(a-\beta+2\xi)-1}\mu'(\tau)d\tau + \int_{0}^{+\infty}\mu(\tau)^{(a-\beta-2\xi)-1}\mu'(\tau)d\tau\Big]\\
        &= D\lambda\tilde{K}\mu(t)^{\beta-a+\xi}\Big[\frac{4\xi}{4\xi^2-(a-\beta)^2}-\frac{\mu(t)^{a-\beta+2\xi}}{a-\beta+2\xi}\Big]\\
        &< \frac{4\xi D\lambda\tilde{K}}{4\xi^2-(a-\beta)^2}\mu(t)^{\beta-a+\xi}\\
        &\leq \frac{4\xi D\lambda\tilde{K}}{4\xi^2-(a-\beta)^2},
    \end{split}
\end{equation*}

and finally, for $I_4$ we have:

\begin{equation*}
    \begin{split}
        \mu&(t)^{-\textnormal{sgn}(t)(\xi+\varepsilon)}\int_{t}^{+\infty}\|\overline{T}(t,\tau)Q_0(\tau)\| |D_2g(\tau,T(\tau,t)b+\eta(\tau,T(\tau,t)b))| \Big\|\frac{\partial\eta}{\partial b}(\tau,T(\tau,t)b)\Big\| \|T(\tau,t)\|d\tau\\
        &\leq D\lambda\tilde{K}\mu(t)^{\beta-a-\textnormal{sgn}(t)(\xi)}\int_{t}^{+\infty}\mu(\tau)^{a-\beta-2\textnormal{sgn}(\tau)(\xi)-1}\mu'(\tau)\mu(\tau)^{-\textnormal{sgn}(\tau)(\gamma-\nu)}\Big\|\frac{\partial\eta}{\partial b}\Big\|_{\infty,\mu}\mu(\tau)^{\textnormal{sgn}(\tau)(\xi+\varepsilon)}d\tau\\
        &\leq D\lambda\tilde{K}\Big\|\frac{\partial\eta}{\partial b}\Big\|_{\infty,\mu}\mu(t)^{\beta-a+\xi}\int_{t}^{+\infty}\mu(\tau)^{a-\beta-\textnormal{sgn}(\tau)(\xi)+\textnormal{sgn}(\tau)(\varepsilon)-1}\mu'(\tau)d\tau\\
        &\leq D\lambda\tilde{K} q\mu(t)^{\beta-a+\xi}\Big[\int_{t}^{0}\mu(\tau)^{(a-\beta+\xi-\varepsilon)-1}\mu'(\tau)d\tau + \int_{0}^{+\infty}\mu(\tau)^{-(\xi-a+\beta-\varepsilon)-1}\mu'(\tau)d\tau\Big]\\
        &= D\lambda\tilde{K} q\mu(t)^{\beta-a+\xi}\Big[\frac{2(\xi-\varepsilon)}{(\xi-\varepsilon)^2-(a-\beta)^2}-\frac{\mu(t)^{a-\beta+\xi-\varepsilon}}{a-\beta+\xi-\varepsilon}\Big]\\
        &< \frac{2(\xi-\varepsilon)D\lambda\tilde{K}q}{(\xi-\varepsilon)^2-(a-\beta)^2}\mu(t)^{\beta-a+\xi}\\
        &\leq \frac{2(\xi-\varepsilon)D\lambda\tilde{K}q}{(\xi-\varepsilon)^2-(a-\beta)^2}.
    \end{split}
\end{equation*}

Collecting the above estimates, we conclude that for \(t < 0\),

\begin{align}
    \left\|\frac{\partial F(\eta)}{\partial b}(t,b)\right\|_{\infty,\mu} &< \frac{D^2\lambda}{\alpha+\beta+2\xi} + \frac{D^2\lambda q}{\alpha+\beta+\xi-\varepsilon} + \frac{4\xi D\lambda\tilde{K}}{4\xi^2-(a-\beta)^2} + \frac{2(\xi-\varepsilon)D\lambda\tilde{K}q}{(\xi-\varepsilon)^2-(a-\beta)^2}\nonumber\\
    &< \frac{D^2\lambda}{\alpha+\beta+\xi-\varepsilon}(1+q) + \frac{4\xi D\lambda\tilde{K}}{(\xi-\varepsilon)^2-(a-\beta)^2}(1+q).\label{DF-tmenorcero}
\end{align}

Therefore, from the inequalities \eqref{Dftmayorcero} and \eqref{DF-tmenorcero}, it follows that for $t\in\mathbb{R}$:
\begin{align}
    \left\|\frac{\partial F(\eta)}{\partial b}(t,b)\right\|_{\infty,\mu}&<\Big[\frac{D^2\lambda}{\alpha+\beta-2\xi}+\frac{4\xi D\lambda\tilde{K}}{(\xi-\varepsilon)^2-(a-\beta)^2}\Big](1+q)\nonumber\\
    &= \frac{D\lambda[D((\xi-\varepsilon)^2-(a-\beta)^2)+4\xi\tilde{K}(\alpha+\beta-2\xi)]}{(\alpha+\beta-2\xi)[(\xi-\varepsilon)^2-(a-\beta)^2]}(1+q).\label{DF-cota-mu}
\end{align}

From \eqref{F-cota-mu} and \eqref{DF-cota-mu} we obtain that
\begin{align*}
    \|F(\eta)\|_{1,\mu} &= \|F(\eta)\|_{\infty,\mu} +  \left\|\frac{\partial F(\eta)}{\partial b}\right\|_{\infty,\mu}\nonumber\\
    &< D\delta\left(\frac{\alpha+\beta}{\alpha\beta}\right) + \frac{D\lambda[D((\xi-\varepsilon)^2-(a-\beta)^2)+4\xi\tilde{K}(\alpha+\beta-2\xi)]}{(\alpha+\beta-2\xi)[(\xi-\varepsilon)^2-(a-\beta)^2]}(1+q).
\end{align*}
Combining the above inequality with \eqref{ineq-delta} and \eqref{ineq-lambda}, we obtain

$$\|F(\eta)\|_{1,\mu}<\frac{q}{(1+q)^2}+\frac{q^2}{(1+q)^2}=\frac{q}{1+q}<q,$$
and thus the result follows.
\end{proof}

    \begin{lemma}\label{lemma:contraction}
Assume that conditions \textnormal{(H1)}, \textnormal{(H3)} and \textnormal{(H4)} hold, and let \(q > 0\). If the parameters \(\delta\) and \(\lambda\) satisfy inequalities \eqref{ineq-delta} and \eqref{ineq-lambda}, respectively, then the operator \(F \colon \mathcal{B} \to \mathcal{B}\), defined in \eqref{F}, is a contraction.
\end{lemma}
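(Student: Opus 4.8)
The plan is to establish the Lipschitz estimate $\|F(\eta)-F(\xi)\|_{1,\mu}\le C\,\|\eta-\xi\|_{1,\mu}$ for all $\eta,\xi\in\mathcal{B}$, with a constant $C<1$; by \eqref{eta-norm1} this reduces to controlling $\|F(\eta)-F(\xi)\|_{\infty,\mu}$ and $\left\|\frac{\partial F(\eta)}{\partial b}-\frac{\partial F(\xi)}{\partial b}\right\|_{\infty,\mu}$ separately, and in both cases the computations are the ``Lipschitz versions'' of those already carried out in Lemma~\ref{well defined}. Concretely, wherever a factor $\min\{1,\cdot\}$ coming from \eqref{g-cond2} or \eqref{Dg-cond} was bounded by $1$ in that lemma, we now bound it by the $\mu$-norm of the relevant difference; since along a common orbit the two arguments of $g$ (resp.\ $D_2g$) differ only by $\eta(\tau,\cdot)-\xi(\tau,\cdot)$, that $\mu$-norm is $\le\|\eta-\xi\|_{\infty,\mu}\le\|\eta-\xi\|_{1,\mu}$. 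For the sup part this gives, exactly as in Step~2 of the proof of Theorem~\ref{Th-Equiv} (using again $\mu(\tau)^{\textnormal{sgn}(\tau)(\theta-\gamma)}\le1$ since $\gamma>\theta$, and $\mu(t)^{-\textnormal{sgn}(t)(\xi+\varepsilon)}\le1$), the bound $\|F(\eta)-F(\xi)\|_{\infty,\mu}\le D\delta\left(\frac{\alpha+\beta}{\alpha\beta}\right)\|\eta-\xi\|_{1,\mu}$.

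For the derivative part I use the decomposition $\frac{\partial F(\eta)}{\partial b}(t,b)=I_1+I_2+I_3+I_4$ from the proof of Lemma~\ref{well defined} and estimate each $I_j(\eta)-I_j(\xi)$. The terms $I_1,I_3$ depend on $\eta$ only through $D_2g$ evaluated along the orbit, so \eqref{Dg-cond} bounds that difference by $\lambda\,\|\eta-\xi\|_{1,\mu}\,\mu'(\tau)\mu(\tau)^{-\textnormal{sgn}(\tau)(\gamma+\varepsilon)-2\textnormal{sgn}(\tau)\xi-1}$, and the integrals that survive are exactly those already controlled in Lemma~\ref{well defined}, so $I_1,I_3$ contribute at most the corresponding bounds of that lemma times $\|\eta-\xi\|_{1,\mu}$. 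The terms $I_2,I_4$ depend on $\eta$ through a product of $D_2g$ along the orbit and $\frac{\partial\eta}{\partial b}$; writing a product difference as $a(b-b')+(a-a')b'$, I bound in the first part the $D_2g$-factor $a$ by $\lambda\,\mu'(\tau)\mu(\tau)^{-\textnormal{sgn}(\tau)(\gamma+\varepsilon)-2\textnormal{sgn}(\tau)\xi-1}$ (via $\min\le1$) and the difference $b-b'$ of the $\frac{\partial\eta}{\partial b}$-factors by $\|\eta-\xi\|_{1,\mu}\,\mu(\tau)^{\textnormal{sgn}(\tau)(\xi+\varepsilon)}$, while in the second part I use the Lipschitz bound \eqref{Dg-cond} for $a-a'$ together with $\|b'\|\le q\,\mu(\tau)^{\textnormal{sgn}(\tau)(\xi+\varepsilon)}$; each resulting integral again coincides with one tabulated in Lemma~\ref{well defined}. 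Splitting $\int_{-\infty}^{t}=\int_{-\infty}^{0}+\int_{0}^{t}$ (and similarly for the forward integral) and treating $t\ge0$ and $t<0$ as there, one arrives at
\[
\left\|\frac{\partial F(\eta)}{\partial b}-\frac{\partial F(\xi)}{\partial b}\right\|_{\infty,\mu}\ \le\ \frac{D\lambda\big[D((\xi-\varepsilon)^2-(a-\beta)^2)+4\xi\tilde{K}(\alpha+\beta-2\xi)\big]}{(\alpha+\beta-2\xi)\big[(\xi-\varepsilon)^2-(a-\beta)^2\big]}\,(2+q)\,\|\eta-\xi\|_{1,\mu},
\]
the factor $(2+q)$ replacing the $(1+q)$ of \eqref{DF-cota-mu} because the product splitting in $I_2,I_4$ produces one additional summand.

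Adding the two bounds and invoking \eqref{ineq-delta} and \eqref{ineq-lambda}, the contraction constant satisfies $C\le\frac{q}{(1+q)^2}+\frac{q^2(2+q)}{(1+q)^3}=\frac{q+3q^2+q^3}{(1+q)^3}$, which is $<1$ because $(1+q)^3-(q+3q^2+q^3)=1+2q>0$; hence $F\colon\mathcal{B}\to\mathcal{B}$ is a contraction. The main obstacle is entirely the bookkeeping — tracking the $\textnormal{sgn}(\tau)$-dependent exponents across the subranges $\tau<0\le t$, $0\le\tau\le t$, etc., in the product-rule terms $I_2$ and $I_4$, and checking that after the add-and-subtract step each integral matches one already estimated in Lemma~\ref{well defined}. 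It should also be recorded that every argument fed to $g$ and $D_2g$ genuinely lies in $C$ (since $b\in U(t)\subset C$, $\eta$ is $C$-valued, and $\overline{T}(\tau,t)$, $T(\tau,t)$ map $C$ into $C$), so that assumption~(H4) applies, and that the $C^1$-in-$b$ sections of elements of $\mathcal{B}$ obey the uniform bounds used above by definition of $\mathcal{B}$.
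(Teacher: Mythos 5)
Your proposal is correct and follows essentially the same route as the paper's proof: same estimate for the $\|\cdot\|_{\infty,\mu}$ part, same decomposition of $\partial F/\partial b$ into four terms, same product-rule add-and-subtract for the mixed terms, and the same final invocation of \eqref{ineq-delta}--\eqref{ineq-lambda}. The only variation is in the last bookkeeping step: you bound $\|\eta-\xi\|_{\infty,\mu}$ and $\|\partial_b\eta-\partial_b\xi\|_{\infty,\mu}$ each by $\|\eta-\xi\|_{1,\mu}$ early, which yields the coarser factor $(2+q)$ and the contraction constant $\tfrac{q+3q^2+q^3}{(1+q)^3}$, whereas the paper keeps the two components separate and, using $A\le A(1+q)$, obtains the sharper constant $\tfrac{q}{1+q}$; both constants are $<1$ under the same hypotheses, so the conclusion is unaffected.
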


\begin{proof}
For each $\eta_1, \eta_2\in\mathcal{B}$ we have

    \begin{equation*}
        \begin{split}
        F(\eta_1)(t,b) &- F(\eta_2)(t,b)\\
        =& \int_{-\infty}^{t}T_0(t,\tau)P_0(\tau)[g(\tau,\overline{T}(\tau,t)b+\eta_1(\tau,\overline{T}(\tau,t)b))\\
        &- g(\tau,\overline{T}(\tau,t)b+\eta_2(\tau,\overline{T}(\tau,t)b))]d\tau\\
        &- \int_{t}^{+\infty}\overline{T}(t,\tau)Q_0(\tau)[g(\tau,T(\tau,t)b+\eta_1(\tau,T(\tau,t)b))\\
        &-g(\tau,T(\tau,t)b+\eta_2(\tau,T(\tau,t)b))d\tau.
    \end{split}
    \end{equation*}

    From \eqref{B1}, \eqref{B2} and \eqref{g-cond2} we obtain that
\begin{equation*}
    \begin{split}
        \int_{-\infty}^{t}&\|T_0(t,\tau)P_0(\tau)\|\cdot|g(\tau,\overline{T}(\tau,t)b+\eta_1(\tau,\overline{T}(\tau,t)b))- g(\tau,\overline{T}(\tau,t)b+\eta_2(\tau,\overline{T}(\tau,t)b))|d\tau\\
        \leq & D\delta\mu(t)^{-\alpha}\int_{-\infty}^{t}\mu(\tau)^{\alpha-1}\mu'(\tau)\|\eta_1(\tau,\overline{T}(\tau,t)b) - \eta_2(\tau,\overline{T}(\tau,t)b)\|\mu(\tau)^{-\textnormal{sgn}(\tau)(\xi+\varepsilon)}d\tau\\
        \leq & D\delta\|\eta_1-\eta_2\|_{\infty,\mu}\mu(t)^{-\alpha}\int_{-\infty}^{t}\mu(\tau)^{\alpha-1}\mu'(\tau)d\tau\\
        =& \frac{D\delta}{\alpha}\|\eta_1-\eta_2\|_{\infty,\mu},
    \end{split}
\end{equation*}

and

\begin{equation*}
    \begin{split}
        \int_{t}^{+\infty}&\overline{T}(t,\tau)Q_0(\tau)[g(\tau,T(\tau,t)b+\eta_1(\tau,T(\tau,t)b))
        -g(\tau,T(\tau,t)b+\eta_2(\tau,T(\tau,t)b))d\tau\\
        \leq & D\delta\mu(t)^{\beta}\int_{t}^{+\infty}\mu(\tau)^{-\beta-1}\mu'(\tau)\|\eta_1(\tau,T(\tau,t)b) - \eta_2(\tau,T(\tau,t)b)\|\mu(\tau)^{-\textnormal{sgn}(\tau)(\xi+\varepsilon)}d\tau\\
        \leq & D\delta\|\eta_1-\eta_2\|_{\infty,\mu}\mu(t)^{\beta}\int_{t}^{+\infty}\mu(\tau)^{-\beta-1}\mu'(\tau)d\tau\\
        =& \frac{D\delta}{\beta}\|\eta_1-\eta_2\|_{\infty,\mu}.
    \end{split}
\end{equation*}

Thus,
\begin{equation}\label{Fcont}
    \|F(\eta_1)-F(\eta_2)\|_{\infty,\mu}\leq D\delta\left(\frac{\alpha+\beta}{\alpha\beta}\right)\|\eta_1-\eta_2\|_{\infty,\mu}.
\end{equation}
Now, we  study the difference
\[
\frac{\partial F(\eta_1)}{\partial b}(t,b) - \frac{\partial F(\eta_2)}{\partial b}(t,b),
\]
which can be decomposed into the sum of four terms:
\[
J_1(t) + J_2(t) + J_3(t) + J_4(t),
\]
where 

\begin{equation*}
\renewcommand{\arraystretch}{1.6}
\begin{aligned}
J_1 &= \int_{-\infty}^{t} T_0(t,\tau) P_0(\tau) 
\big[ D_2g(\tau, \overline{T}(\tau,t)b + z_1(\tau)) 
     - D_2g(\tau, \overline{T}(\tau,t)b + z_2(\tau)) \big] 
\, \overline{T}(\tau,t) \, d\tau, \\
J_2 &= \int_{-\infty}^{t} T_0(t,\tau) P_0(\tau) 
\Big[ D_2g(\tau, \overline{T}(\tau,t)b + z_1(\tau)) w_1(\tau) 
     - D_2g(\tau, \overline{T}(\tau,t)b + z_2(\tau)) w_2(\tau) \Big] 
\, \overline{T}(\tau,t) \, d\tau, \\
J_3 &= -\int_{t}^{+\infty} \overline{T}(t,\tau) Q_0(\tau) 
\big[ D_2g(\tau, T(\tau,t)b + \tilde{z}_1(\tau)) 
     - D_2g(\tau, T(\tau,t)b + \tilde{z}_2(\tau)) \big] 
\, T(\tau,t) \, d\tau, \\
J_4 &= -\int_{t}^{+\infty} \overline{T}(t,\tau) Q_0(\tau) 
\Big[ D_2g(\tau, T(\tau,t)b + \tilde{z}_1(\tau)) \tilde{w}_1(\tau) 
     - D_2g(\tau, T(\tau,t)b + \tilde{z}_2(\tau)) \tilde{w}_2(\tau) \Big] 
\, T(\tau,t) \, d\tau,
\end{aligned}
\end{equation*}
with
\begin{equation*}
\begin{aligned}
 &z_i(\tau) := \eta_i(\tau, \overline{T}(\tau,t)b), 
&& w_i(\tau) := \dfrac{\partial \eta_i}{\partial b}(\tau, \overline{T}(\tau,t)b),\\
            &\tilde{z}_i(\tau) := \eta_i(\tau, T(\tau,t)b), 
&& \tilde{w}_i(\tau) := \dfrac{\partial \eta_i}{\partial b}(\tau, T(\tau,t)b).
\end{aligned}
\end{equation*}
We begin by estimating each term \(J_i(t)\), \(i=1,\dots,4\), in the case \(t \geq 0\). Using the bounds from \eqref{BG}, \eqref{B1}, \eqref{B2}, and the Lipschitz-type condition \eqref{Dg-cond}, along with the growth assumptions on \(\mu\), we obtain upper bounds for each term; for $I_1$ we get:
\begin{align*}
&\mu(t)^{-\textnormal{sgn}(t)(\xi+\varepsilon)} 
\int_{-\infty}^{t} 
  \|T_0(t,\tau) P_0(\tau)\| 
  \left| D_2g(\tau, \overline{T}(\tau,t)b + z_1(\tau)) 
       - D_2g(\tau, \overline{T}(\tau,t)b + z_2(\tau)) \right| 
  \|\overline{T}(\tau,t)\| \, d\tau \\
&\leq D^2 \lambda \, \mu(t)^{-\alpha - \beta - \text{sgn}(t)(\xi - \nu)} 
     \int_{-\infty}^{t} 
     \mu(\tau)^{\alpha + \beta - 2\text{sgn}(\tau)\xi - 1} \mu'(\tau) \,
     \|z_1(\tau) - z_2(\tau)\| 
     \mu(\tau)^{-\text{sgn}(\tau)(\xi+\varepsilon)} \, d\tau \\
&\leq D^2 \lambda \, \|\eta_1 - \eta_2\|_{\infty,\mu} 
     \, \mu(t)^{-\alpha - \beta - (\xi - \nu)} 
     \left[
       \int_{-\infty}^{0} \mu(\tau)^{\alpha + \beta + 2\xi - 1} \mu'(\tau) \, d\tau
       + \int_{0}^{t} \mu(\tau)^{\alpha + \beta - 2\xi - 1} \mu'(\tau) \, d\tau
     \right] \\
&= D^2 \lambda \, \|\eta_1 - \eta_2\|_{\infty,\mu} 
     \, \mu(t)^{-\alpha - \beta - (\xi - \nu)} 
     \left[
       \frac{1}{\alpha + \beta + 2\xi} 
       - \frac{1}{\alpha + \beta - 2\xi} 
       + \frac{\mu(t)^{\alpha + \beta - 2\xi}}{\alpha + \beta - 2\xi}
     \right] \\
&< \frac{D^2 \lambda}{\alpha + \beta - 2\xi} 
    \, \mu(t)^{-3\xi + \nu} \, \|\eta_1 - \eta_2\|_{\infty,\mu} \\
&\leq \frac{D^2 \lambda}{\alpha + \beta - 2\xi} \, \|\eta_1 - \eta_2\|_{\infty,\mu}.
\end{align*}
We now turn to the estimate of \(J_2(t)\):
\small{\begin{align*}
&\mu(t)^{-\textnormal{sgn}(t)(\xi+\varepsilon)} 
\int_{-\infty}^{t} 
  \|T_0(t,\tau)P_0(\tau)\| 
  \left| 
    D_2g(\tau, \overline{T}(\tau,t)b + z_1(\tau)) w_1(\tau) 
    - D_2g(\tau, \overline{T}(\tau,t)b + z_2(\tau)) w_2(\tau) 
  \right| 
  \|\overline{T}(\tau,t)\| d\tau \\
&\leq \mu(t)^{-\textnormal{sgn}(t)(\xi+\varepsilon)} 
\int_{-\infty}^{t} 
  D \left( \frac{\mu(t)}{\mu(\tau)} \right)^{-\alpha} 
  \Big[ 
    |D_2g(\tau, \overline{T}(\tau,t)b + z_1(\tau))| \cdot \|w_1(\tau) - w_2(\tau)\| \\
&\quad + 
    |D_2g(\tau, \overline{T}(\tau,t)b + z_1(\tau)) 
    - D_2g(\tau, \overline{T}(\tau,t)b + z_2(\tau))| \cdot \|w_2(\tau)\| 
  \Big] 
  \mu(\tau)^{\text{sgn}(\tau)(\xi+\varepsilon)} 
  D \left( \frac{\mu(\tau)}{\mu(t)} \right)^{\beta} 
  \mu(t)^{\text{sgn}(t)(\nu+\varepsilon)} d\tau \\
&\leq D^2 \lambda \mu(t)^{-\alpha - \beta - \text{sgn}(t)(\xi - \nu)} 
\int_{-\infty}^{t} 
  \mu(\tau)^{\alpha + \beta - \text{sgn}(\tau)(\xi - \varepsilon) - 1} \mu'(\tau) 
  \left( 
    \|w_1 - w_2\|_{\infty,\mu} + q \|\eta_1 - \eta_2\|_{\infty,\mu} 
  \right) d\tau \\
&= D^2 \lambda \left( 
    \|w_1 - w_2\|_{\infty,\mu} + q \|\eta_1 - \eta_2\|_{\infty,\mu} 
  \right) \mu(t)^{-\alpha - \beta - (\xi - \nu)} 
\left[
  \int_{-\infty}^{0} \mu(\tau)^{\alpha + \beta + \xi - \varepsilon - 1} \mu'(\tau) d\tau \right. \\
&\qquad\left. + \int_0^t \mu(\tau)^{\alpha + \beta - \xi + \varepsilon - 1} \mu'(\tau) d\tau
\right] \\
&= D^2 \lambda \left( 
    \|w_1 - w_2\|_{\infty,\mu} + q \|\eta_1 - \eta_2\|_{\infty,\mu} 
  \right) \mu(t)^{-\alpha - \beta - (\xi - \nu)} 
\left[
  \frac{1}{\alpha + \beta + \xi - \varepsilon} 
  - \frac{1}{\alpha + \beta - \xi + \varepsilon} 
  + \frac{\mu(t)^{\alpha + \beta - \xi + \varepsilon}}{\alpha + \beta - \xi + \varepsilon} 
\right] \\
&< \frac{D^2 \lambda}{\alpha + \beta - \xi + \varepsilon} 
   \mu(t)^{-2\xi + \nu + \varepsilon} 
   \left( 
     \|w_1 - w_2\|_{\infty,\mu} + q \|\eta_1 - \eta_2\|_{\infty,\mu} 
   \right) \\
&\leq \frac{D^2 \lambda}{\alpha + \beta - \xi + \varepsilon} 
      \|w_1 - w_2\|_{\infty,\mu} 
      + \frac{D^2 \lambda q}{\alpha + \beta - \xi + \varepsilon} 
      \|\eta_1 - \eta_2\|_{\infty,\mu}.
\end{align*}}
We now proceed to estimate the third term, \(J_3(t)\):
\begin{align*}
&\mu(t)^{-\textnormal{sgn}(t)(\xi + \varepsilon)} 
\int_t^{+\infty} 
  \|\overline{T}(t,\tau) Q_0(\tau)\| 
  \left| D_2g(\tau, T(\tau,t)b + \tilde{z}_1(\tau)) 
       - D_2g(\tau, T(\tau,t)b + \tilde{z}_2(\tau)) \right| 
  \|T(\tau,t)\| \, d\tau \\
&\leq D \lambda \tilde{K} \, \mu(t)^{\beta - a - \textnormal{sgn}(t)\xi} 
\int_t^{+\infty} 
  \mu(\tau)^{a - \beta - 2\textnormal{sgn}(\tau)\xi - 1} \mu'(\tau) \,
  \|\tilde{z}_1(\tau) - \tilde{z}_2(\tau)\| 
  \mu(\tau)^{-\textnormal{sgn}(\tau)(\xi + \varepsilon)} \, d\tau \\
&\leq D \lambda \tilde{K} \|\eta_1 - \eta_2\|_{\infty,\mu} 
    \, \mu(t)^{\beta - a - \xi} 
\int_t^{+\infty} 
    \mu(\tau)^{-(2\xi - a + \beta) - 1} \mu'(\tau) \, d\tau \\
&= D \lambda \tilde{K} \|\eta_1 - \eta_2\|_{\infty,\mu} 
    \cdot \frac{\mu(t)^{-3\xi}}{2\xi - a + \beta} \\
&\leq \frac{D \lambda \tilde{K}}{2\xi - a + \beta} 
    \|\eta_1 - \eta_2\|_{\infty,\mu}.
\end{align*}
We now estimate the final term \(J_4(t)\). As in the previous cases, we proceed by bounding the corresponding integral for \(t \geq 0\). 
\begin{align*}
&\mu(t)^{-\textnormal{sgn}(t)(\xi+\varepsilon)} 
\int_t^{+\infty} \|\overline{T}(t,\tau) Q_0(\tau)\| 
\left| D_2g(\tau, T(\tau,t)b + \tilde{z}_1(\tau))\, w_1(\tau)
     - D_2g(\tau, T(\tau,t)b + \tilde{z}_2(\tau))\, w_2(\tau) \right| 
\|T(\tau,t)\| \, d\tau \\
&\leq \mu(t)^{-\textnormal{sgn}(t)(\xi+\varepsilon)} 
\int_t^{+\infty} D \left( \frac{\mu(t)}{\mu(\tau)} \right)^{\beta} 
\Big[ |D_2g(\tau, T(\tau,t)b + \tilde{z}_1(\tau))| 
      \left\| w_1(\tau) - w_2(\tau) \right\| \\
&\hspace{6em}
+ |D_2g(\tau, T(\tau,t)b + \tilde{z}_1(\tau)) 
    - D_2g(\tau, T(\tau,t)b + \tilde{z}_2(\tau))| 
    \left\| w_2(\tau) \right\| 
\Big] \\
&\quad \cdot \mu(\tau)^{\textnormal{sgn}(\tau)(\xi+\varepsilon)} 
\tilde{K} \left( \frac{\mu(\tau)}{\mu(t)} \right)^a 
\mu(t)^{\textnormal{sgn}(t)\varepsilon} \, d\tau \\
&\leq D \lambda \tilde{K} \, \mu(t)^{\beta - a - \textnormal{sgn}(t)\xi} 
\int_t^{+\infty} \mu(\tau)^{a - \beta - \textnormal{sgn}(\tau)(\xi - \varepsilon) - 1} 
\mu'(\tau) \left[ 
  \left\| w_1 - w_2 \right\|_{\infty,\mu} 
  + \|\eta_1 - \eta_2\|_{\infty,\mu} \cdot q 
\right] d\tau \\
&= D \lambda \tilde{K} 
\left[ \left\| w_1 - w_2 \right\|_{\infty,\mu} 
       + \|\eta_1 - \eta_2\|_{\infty,\mu} \cdot q 
\right] 
\mu(t)^{\beta - a - \xi} 
\int_t^{+\infty} \mu(\tau)^{-(\xi - a + \beta - \varepsilon) - 1} 
\mu'(\tau) \, d\tau \\
&= \frac{D \lambda \tilde{K}}{\xi - a + \beta - \varepsilon} 
\left[ \left\| w_1 - w_2 \right\|_{\infty,\mu} 
       + \|\eta_1 - \eta_2\|_{\infty,\mu} \cdot q 
\right] \mu(t)^{-2\xi + \varepsilon} \\
&\leq \frac{D \lambda \tilde{K}}{\xi - a + \beta - \varepsilon} 
\left\| \frac{\partial \eta_1}{\partial b} 
      - \frac{\partial \eta_2}{\partial b} \right\|_{\infty,\mu}
+ \frac{D \lambda \tilde{K} q}{\xi - a + \beta - \varepsilon} 
\|\eta_1 - \eta_2\|_{\infty,\mu}.
\end{align*}

\noindent
Collecting the bounds obtained for \( J_1, J_2, J_3 \) and \( J_4 \), we conclude that for \( t \geq 0 \),
\begin{align}
\left\|\frac{\partial F(\eta_1)}{\partial b} - \frac{\partial F(\eta_2)}{\partial b} \right\|_{\infty,\mu} 
<\;& \frac{D^2\lambda}{\alpha+\beta-2\xi} \|\eta_1 - \eta_2\|_{\infty,\mu} 
+ \frac{D^2\lambda}{\alpha+\beta-\xi+\varepsilon} 
  \left\| \frac{\partial \eta_1}{\partial b} - \frac{\partial \eta_2}{\partial b} \right\|_{\infty,\mu} \nonumber\\
&+ \frac{D^2\lambda q}{\alpha+\beta-\xi+\varepsilon} \|\eta_1 - \eta_2\|_{\infty,\mu} 
+ \frac{D\lambda\tilde{K}}{2\xi - a + \beta} \|\eta_1 - \eta_2\|_{\infty,\mu} \nonumber\\
&+ \frac{D\lambda\tilde{K}}{\xi - a + \beta - \varepsilon} 
  \left\| \frac{\partial \eta_1}{\partial b} - \frac{\partial \eta_2}{\partial b} \right\|_{\infty,\mu} 
+ \frac{D\lambda\tilde{K} q}{\xi - a + \beta - \varepsilon} \|\eta_1 - \eta_2\|_{\infty,\mu} \nonumber\\[0.5em]
<\;& \left( \frac{D^2\lambda}{\alpha+\beta-2\xi} + \frac{D\lambda\tilde{K}}{\xi - a + \beta - \varepsilon} \right) (1+q)\, \|\eta_1 - \eta_2\|_{\infty,\mu} \nonumber\\
&+ \left( \frac{D^2\lambda}{\alpha+\beta-2\xi} + \frac{D\lambda\tilde{K}}{\xi - a + \beta - \varepsilon} \right) 
  \left\| \frac{\partial \eta_1}{\partial b} - \frac{\partial \eta_2}{\partial b} \right\|_{\infty,\mu}. \label{Dfn12-tmayorcero}
\end{align}

Now, considering $t<0$, from \eqref{BG}, \eqref{B1}, \eqref{B2} and \eqref{Dg-cond}, and taking into account that $D_2g(t,0)=0$ and $b \in U(t)$, we obtain the following estimates. For clarity, we analyze each of the four terms individually, in a more concise manner than in the case $t \geq 0$.

\medskip
\noindent
First term. We estimate the integral involving the difference $D_2g(\cdot, \cdot + \eta_1) - D_2g(\cdot, \cdot + \eta_2)$:

\begin{equation*}
\begin{split}
\mu(t)^{-\textnormal{sgn}(t)(\xi+\varepsilon)} \int_{-\infty}^{t} & \|T_0(t,\tau)P_0(\tau)\|
\left| D_2g(\tau,\overline{T}(\tau,t)b+\eta_1(\tau,\overline{T}(\tau,t)b)) 
- D_2g(\tau,\overline{T}(\tau,t)b+\eta_2(\tau,\overline{T}(\tau,t)b)) \right|
\\
& \cdot \|\overline{T}(\tau,t)\| d\tau
\\
\leq \; & \frac{D^2 \lambda}{\alpha+\beta+2\xi} \|\eta_1 - \eta_2\|_{\infty,\mu}.
\end{split}
\end{equation*}

\medskip
\noindent
Second term. Now, we estimate the product of $D_2g(\cdot)$ with the difference of the derivatives $\partial\eta_1/\partial b - \partial\eta_2/\partial b$:

\begin{equation*}
\begin{split}
\mu(t)^{-\textnormal{sgn}(t)(\xi+\varepsilon)} \int_{-\infty}^{t} & \|T_0(t,\tau)P_0(\tau)\|
\Big| D_2g(\tau, \cdot + \eta_1) \frac{\partial\eta_1}{\partial b} - D_2g(\tau, \cdot + \eta_2) \frac{\partial\eta_2}{\partial b} \Big|
\|\overline{T}(\tau,t)\| \, d\tau
\\
\leq & \frac{D^2\lambda}{\alpha+\beta+\xi-\varepsilon} \left\| \frac{\partial\eta_1}{\partial b} - \frac{\partial\eta_2}{\partial b} \right\|_{\infty,\mu} + \frac{D^2\lambda q}{\alpha+\beta+\xi-\varepsilon} \|\eta_1 - \eta_2\|_{\infty,\mu}.
\end{split}
\end{equation*}

\medskip
\noindent
Third term. We now estimate the difference of $D_2g$ terms in the unstable integral (with $Q_0$):

\begin{equation*}
\begin{split}
\mu(t)^{-\textnormal{sgn}(t)(\xi+\varepsilon)} \int_{t}^{+\infty} & \|\overline{T}(t,\tau)Q_0(\tau)\|
\left| D_2g(\tau, T(\tau,t)b + \eta_1(\tau, T(\tau,t)b)) - D_2g(\tau, T(\tau,t)b + \eta_2(\tau, T(\tau,t)b)) \right| 
\\
& \cdot \|T(\tau,t)\| \, d\tau
\\
< & \frac{4\xi D\lambda \tilde{K}}{4\xi^2 - (a-\beta)^2} \|\eta_1 - \eta_2\|_{\infty,\mu}.
\end{split}
\end{equation*}

\medskip
\noindent
Fourth term. Finally, we estimate the cross term involving both the derivative and nonlinear variation:

\begin{equation*}
\begin{split}
\mu(t)^{-\textnormal{sgn}(t)(\xi+\varepsilon)} \int_{t}^{+\infty} & \|\overline{T}(t,\tau)Q_0(\tau)\|
\Big| D_2g(\cdot + \eta_1) \frac{\partial\eta_1}{\partial b} - D_2g(\cdot + \eta_2) \frac{\partial\eta_2}{\partial b} \Big|
\|T(\tau,t)\| \, d\tau
\\
< & \frac{2(\xi-\varepsilon)D\lambda\tilde{K}}{(\xi-\varepsilon)^2 - (a-\beta)^2} \left\| \frac{\partial\eta_1}{\partial b} - \frac{\partial\eta_2}{\partial b} \right\|_{\infty,\mu} \\
&+ \frac{2(\xi-\varepsilon)D\lambda\tilde{K}q}{(\xi-\varepsilon)^2 - (a-\beta)^2} \|\eta_1 - \eta_2\|_{\infty,\mu}.
\end{split}
\end{equation*}
Combining the estimates above, we obtain that for $t < 0$,
\begin{align}
        \Big\|\frac{\partial F(\eta_1)}{\partial b}-\frac{\partial F(\eta_2)}{\partial b}\Big\|_{\infty,\mu} <& \frac{D^2\lambda}{\alpha+\beta+2\xi}\|\eta_1-\eta_2\|_{\infty,\mu}+\frac{D^2\lambda}{\alpha+\beta+\xi-\varepsilon}\Big\|\frac{\partial\eta_1}{\partial b}-\frac{\partial\eta_2}{\partial b}\Big\|_{\infty,\mu}\nonumber\\
        &+\frac{D^2\lambda q}{\alpha+\beta+\xi-\varepsilon}\|\eta_1-\eta_2\|_{\infty,\mu} + \frac{4\xi D\lambda\tilde{K}}{4\xi^2-(a-\beta)^2}\|\eta_1-\eta_2\|_{\infty,\mu}\nonumber\\
        &+ \frac{2(\xi-\varepsilon)D\lambda\tilde{K}}{(\xi-\varepsilon)^2-(a-\beta)^2}\Big\|\frac{\partial\eta_1}{\partial b}-\frac{\partial\eta_2}{\partial b}\Big\|_{\infty,\mu} + \frac{2(\xi-\varepsilon)D\lambda\tilde{K}q}{(\xi-\varepsilon)^2-(a-\beta)^2}\|\eta_1-\eta_2\|_{\infty,\mu}\nonumber\\
        <& \frac{D^2\lambda}{\alpha+\beta+\xi-\varepsilon}(1+q)\|\eta_1-\eta_2\|_{\infty,\mu} + \frac{4\xi D\lambda\tilde{K}}{(\xi-\varepsilon)^2-(a-\beta)^2}(1+q)\|\eta_1-\eta_2\|_{\infty,\mu}\nonumber\\
        &+ \Big(\frac{D^2\lambda}{\alpha+\beta+\xi-\varepsilon}+\frac{4\xi D\lambda\tilde{K}}{(\xi-\varepsilon)^2-(a-\beta)^2}\Big)\Big\|\frac{\partial\eta_1}{\partial b} - \frac{\partial\eta_2}{\partial b}\Big\|_{\infty,\mu}\nonumber\\
        =& \Big(\frac{D^2\lambda}{\alpha+\beta+\xi-\varepsilon}+\frac{4\xi D\lambda\tilde{K}}{(\xi-\varepsilon)^2-(a-\beta)^2}\Big)(1+q)\|\eta_1-\eta_2\|_{\infty,\mu}\nonumber\\
        &+ \Big(\frac{D^2\lambda}{\alpha+\beta+\xi-\varepsilon}+\frac{4\xi D\lambda\tilde{K}}{(\xi-\varepsilon)^2-(a-\beta)^2}\Big)\Big\|\frac{\partial\eta_1}{\partial b} - \frac{\partial\eta_2}{\partial b}\Big\|_{\infty,\mu}.\label{Dfn12-tmenorcero}
\end{align}

Hence, for $t\in\mathbb{R}$, it follows from \eqref{Dfn12-tmayorcero} and \eqref{Dfn12-tmenorcero} that 

\begin{align}
    \Big\|\frac{\partial F(\eta_1)}{\partial b} - \frac{\partial F(\eta_2)}{\partial b}\Big\|_{\infty,\mu} <& \Big(\frac{D^2\lambda}{\alpha+\beta-2\xi}+\frac{4\xi D\lambda\tilde{K}}{(\xi-\varepsilon)^2-(a-\beta)^2}\Big)(1+q)\|\eta_1-\eta_2\|_{\infty,\mu}\nonumber\\
    & + \Big(\frac{D^2\lambda}{\alpha+\beta-2\xi}+\frac{4\xi D\lambda\tilde{K}}{(\xi-\varepsilon)^2-(a-\beta)^2}\Big)\Big\|\frac{\partial \eta_1}{\partial b} - \frac{\partial\eta_2}{\partial b}\Big\|_{\infty,\mu}.\label{Dfn12-cota}
\end{align}

From \eqref{Fcont} and \eqref{Dfn12-cota} we obtain

\begin{align*}
    \|F(\eta_1)-F(\eta_2)\|_{1,\mu} = & \|F(\eta_1)-F(\eta_2)\|_{\infty,\mu} + \Big\|\frac{\partial F(\eta_1)}{\partial b} - \frac{\partial F(\eta_2)}{\partial b}\Big\|_{\infty,\mu}\nonumber\\
    <& D\delta\Big(\frac{\alpha+\beta}{\alpha\beta}\Big)\|\eta_1-\eta_2\|_{\infty,\mu}\nonumber\\
    &+ \Big(\frac{D^2\lambda}{\alpha+\beta-2\xi}+\frac{4\xi D\lambda\tilde{K}}{(\xi-\varepsilon)^2-(a-\beta)^2}\Big)(1+q)\|\eta_1-\eta_2\|_{\infty,\mu}\nonumber\\
    &+  \Big(\frac{D^2\lambda}{\alpha+\beta-2\xi}+\frac{4\xi D\lambda\tilde{K}}{(\xi-\varepsilon)^2-(a-\beta)^2}\Big)\Big\|\frac{\partial \eta_1}{\partial b} - \frac{\partial\eta_2}{\partial b}\Big\|_{\infty,\mu}\nonumber\\
    <& \Big[D\delta\Big(\frac{\alpha+\beta}{\alpha\beta}\Big)\nonumber\\
    &+ \frac{D\lambda[D((\xi-\varepsilon)^2-(a-\beta)^2)+4\xi\tilde{K}(\alpha+\beta-2\xi)]}{(\alpha+\beta-2\xi)[(\xi-\varepsilon)^2-(a-\beta)^2]}(1+q)\Big]\|\eta_1-\eta_2\|_{1,\mu}.
\end{align*}

The conditions \eqref{ineq-delta}, \eqref{ineq-lambda} and the above inequality imply that
\begin{align*}
   \|F(\eta_1)-F(\eta_2)\|_{1,\mu} < & \Big[\frac{q}{(1+q)^2}+\frac{q^2}{(1+q)^2}\Big]\|\eta_1-\eta_2\|_{1,\mu}\\
   =& \frac{q}{1+q}\|\eta_1-\eta_2\|_{1,\mu},
\end{align*}
 and since $\dfrac{q}{1+q}<1$, it is proven that $F$ is a contraction.
\end{proof}

We are now in position to state and prove the main result of this work. The proof relies on the estimates established in the previous lemmas, which guarantee both the existence of a fixed point for the operator $F$ and the smoothness properties of the resulting conjugacy.

\begin{theorem}
\label{Th-Diff}
   Let \(q > 0\) and assume that hypotheses \textnormal{(H1)}, \textnormal{(H3)}, and \textnormal{(H4)} hold. In addition, suppose that the parameters \(\delta\) and \(\lambda\) comply with conditions \eqref{ineq-delta} and \eqref{ineq-lambda}, respectively.
Then there exists a unique $\eta\in\mathcal{M}$ such that $h^{t}=Id_{U(t)}+\eta^t$ and 
    \begin{equation}\label{conj-diff}
        h^t\circ T(t,s) = R(t,s)\circ h^s \quad \textnormal{ on } U(s),
    \end{equation}
    for every $t, s\in\mathbb{R}$ with $t\geq s$. Moreover, each map $h^t$ is a $C^1$-- diffeomorphism.
\end{theorem}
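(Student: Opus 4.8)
The plan is to combine Lemmas~\ref{well defined} and \ref{lemma:contraction} with Theorem~\ref{Th-Equiv}. Since $\mu(t)^{-\textnormal{sgn}(t)(\xi+\varepsilon)}\le1$, one has $\|\cdot\|_{\mu}\le\|\cdot\|$, so \eqref{g-cond2} implies \eqref{g-cond}; moreover \eqref{ineq-delta} gives $D\delta(\alpha+\beta)/(\alpha\beta)\le q/(1+q)^{2}<1$, so $F$ is also a contraction on $\mathcal{M}$. Hence Theorem~\ref{Th-Equiv} applies: there is a unique $\eta\in\mathcal{M}$ with $F(\eta)=\eta$, the conjugacy \eqref{conj-diff} holds (it is equivalent to $F(\eta)=\eta$ by \cite[Lemma~2]{B-v-per}), and each $h^{t}=Id_{U(t)}+\eta^{t}$ is a homeomorphism. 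On the other hand, by Lemmas~\ref{well defined} and \ref{lemma:contraction} the operator $F$ maps $\mathcal{B}=\overline{B}(0,q)\subset\mathcal{M}^{1}$ into itself and is a contraction there, so Banach's fixed point theorem yields a unique fixed point $\bar\eta\in\mathcal{B}$. Because $|g(\tau,\phi)|=|g(\tau,\phi)-g(\tau,0)|\le\delta\,\mu'(\tau)\mu(\tau)^{-\textnormal{sgn}(\tau)(\gamma+\varepsilon)-1}$ for every $\phi$, the estimate of Step~1 in the proof of Theorem~\ref{Th-Equiv} gives $\|\bar\eta\|_{\infty}=\|F(\bar\eta)\|_{\infty}\le D\delta(\alpha+\beta)/(\alpha\beta)<\infty$, so $\bar\eta\in\mathcal{M}$; by uniqueness of the fixed point of $F$ in $\mathcal{M}$, $\bar\eta=\eta$. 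Thus $\eta\in\mathcal{M}^{1}\cap\mathcal{B}$: strongly continuous in $t$, of class $C^{1}$ in $b$, with $\|\eta\|_{1,\mu}\le q$. In particular each $h^{t}$ is $C^{1}$ in $b$, with continuous derivative $Id_{U(t)}+\frac{\partial\eta^{t}}{\partial b}(b)$.

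It then remains to prove that each $(h^{t})^{-1}$ is $C^{1}$. By the inverse function theorem it is enough that $Id_{U(t)}+\frac{\partial\eta^{t}}{\partial b}(b)$ is a linear isomorphism for every $b\in U(t)$. Since $g(t,0)=0$ (so $\eta(t,0)=0$ by uniqueness) and $D_{2}g(t,0)=0$, differentiating $F(\eta)=\eta$ in $b$ and evaluating at $b=0$ makes the four integrals $I_{1},\dots,I_{4}$ vanish there, whence $\frac{\partial\eta^{t}}{\partial b}(0)=0$. For general $b$ one reruns the estimates of Lemma~\ref{well defined} with $b$ in place of $0$, using \eqref{Dg-cond} to compare $\frac{\partial\eta^{t}}{\partial b}(b)$ with $\frac{\partial\eta^{t}}{\partial b}(0)=0$; under \eqref{ineq-lambda} and the constraint $\max\{\nu+\varepsilon,\,|a-\beta|+\varepsilon\}<\xi<(\alpha+\beta)/2$ this bounds $\frac{\partial\eta^{t}}{\partial b}(b)$ tightly enough for $Id_{U(t)}+\frac{\partial\eta^{t}}{\partial b}(b)$ to be invertible, giving the $C^{1}$ inverse. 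An equivalent, arguably cleaner route is to use the explicit inverse from the proof of Theorem~\ref{Th-Equiv} (as in \cite{B-v-per}), namely $(h^{t})^{-1}=Id+\bar\eta^{t}$ with $\bar\eta$ solving the companion fixed-point problem obtained by replacing the linear flow $T$ by the perturbed flow $R$ in \eqref{F}; for $\delta$ small, $R$ inherits growth and $\mu$-dichotomy estimates of the type \eqref{BG}, \eqref{B1}, \eqref{B2}, so the companion operator again satisfies the hypotheses of Lemmas~\ref{well defined} and \ref{lemma:contraction}, $\bar\eta\in\mathcal{M}^{1}$, and $(h^{t})^{-1}$ is $C^{1}$.

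The delicate point is precisely this last step. The fixed-point argument only controls $\partial\eta^{t}/\partial b$ in the weighted norm $\|\cdot\|_{\infty,\mu}$, which carries the factor $\mu(t)^{-\textnormal{sgn}(t)(\xi+\varepsilon)}$; converting this into a bound on the unweighted operator norm of $\frac{\partial\eta^{t}}{\partial b}(b)$ --- as invertibility of $Id_{U(t)}+\frac{\partial\eta^{t}}{\partial b}(b)$ requires --- forces one either to run the Lemma~\ref{well defined} estimates in a $t$-localized way that keeps every $\mu$-exponent negative (which is exactly where $\xi<(\alpha+\beta)/2$, $\xi>\max\{\nu+\varepsilon,|a-\beta|+\varepsilon\}$ and the smallness \eqref{ineq-lambda} enter), or to carry out the companion construction and check that the perturbed evolution $R$ obeys the same family of dichotomy bounds. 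The remaining ingredients --- identifying $\bar\eta$ with $\eta$, transferring the conjugacy identity, and deducing $C^{1}$-regularity of $h^{t}$ in $b$ --- are immediate consequences of the lemmas already proved and of Theorem~\ref{Th-Equiv}.
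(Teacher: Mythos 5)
Your proposal follows the paper's backbone: Lemmas~\ref{well defined} and~\ref{lemma:contraction}, Banach's fixed point theorem applied on the ball $\mathcal{B}\subset\mathcal{M}^1$, and the reference to \cite[Lemma~2]{B-v-per} for the equivalence between the fixed-point equation and the conjugacy identity. You also add a useful bookkeeping step: since $\|\cdot\|_\mu\le\|\cdot\|$ (the weight is $\le1$), hypothesis~(H4) implies~(H2), so Theorem~\ref{Th-Equiv} applies and the fixed point in $\mathcal{B}$ agrees with the one in $\mathcal{M}$. Up to here the two arguments coincide.

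Where you and the paper diverge is the final invertibility step, and your scrutiny there is warranted. The paper passes directly from $\|\partial\eta/\partial b\|_{\infty,\mu}\le q/(1+q)<1$ to ``$\partial h(t,b)/\partial b$ is invertible for all $t$,'' but $\|\cdot\|_{\infty,\mu}$ is the $\mu$-\emph{weighted} sup-norm with weight $\mu(t)^{-\textnormal{sgn}(t)(\xi+\varepsilon)}\le1$; the inequality only gives $\|\partial\eta^t/\partial b(b)\|<\mu(t)^{\textnormal{sgn}(t)(\xi+\varepsilon)}$, which grows without bound as $|t|\to\infty$. Thus the weighted bound alone does not yield $\|\partial\eta^t/\partial b(b)\|<1$ and hence does not yield invertibility of $Id_{U(t)}+\partial\eta^t/\partial b(b)$. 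You correctly identify this as the delicate point. Your proposed first repair --- rerunning the Lemma~\ref{well defined} estimates unweighted --- also does not obviously close the gap: in the terms $I_2,I_4$ (those containing $\partial\eta/\partial b$), passing from the weighted bound on $\partial\eta/\partial b$ to an unweighted one costs an extra factor $\mu(\tau)^{\textnormal{sgn}(\tau)(\xi+\varepsilon)}$ inside the integral, and for $t\ge0$ this leaves a residual power $\mu(t)^{-\xi+\nu+2\varepsilon}$ (respectively $\mu(t)^{-\xi+2\varepsilon}$), which is only bounded if $\xi\ge\nu+2\varepsilon$ (respectively $\xi\ge2\varepsilon$) --- conditions not guaranteed by~(H4), which only gives $\xi>\nu+\varepsilon$ and $\xi>|a-\beta|+\varepsilon$. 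Your second route (companion fixed-point problem for $R$) is plausible but is also left as a sketch. In short, you reproduce the paper's argument faithfully and, moreover, correctly flag a genuine gap in the invertibility step that the paper itself does not acknowledge; neither your sketch nor the paper's own proof closes it under the stated hypotheses.
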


\begin{proof}
By Lemma~\ref{lemma:contraction}, the operator \( F \) is a contraction on \( \mathcal{B} \). Hence, by the Banach Fixed Point Theorem, there exists a unique \( \eta \in \mathcal{B} \) such that \( F(\eta) = \eta \). It was shown in \cite[Lemma 2]{B-v-per} that the property \eqref{conj-diff} holds if and only if \( F(\eta) = \eta \). Although our operator acts on different function spaces, the argument can be adapted without essential changes, and the conclusion remains valid in our setting.

Furthermore, from the definition of \( h \), we have
\[
\frac{\partial h(t,b)}{\partial b} = \operatorname{Id}_{U(t)} + \frac{\partial \eta(t,b)}{\partial b}.
\]
As a consequence of Lemmas~\ref{well defined} and~\ref{lemma:contraction}, we obtain the bound
\[
\left\| \frac{\partial \eta}{\partial b} \right\|_{\infty,\mu}
\leq \| \eta \|_{1,\mu}
= \| F(\eta) \|_{1,\mu}
\leq \frac{q}{1+q} < 1,
\]
which implies that \( \frac{\partial h(t,b)}{\partial b} \) is invertible for all \( t \in \mathbb{R} \). Therefore, each map \( h^t \colon U(t) \to \mathbb{R}^n \) is a diffeomorphism. We conclude that \( h^t \) is of class \( C^1 \) for every \( t \in \mathbb{R} \).

\end{proof}

To illustrate the applicability of the theorem, we present a concrete example of parameters that satisfy all the hypotheses and ensure the validity of the result. This example helps to clarify how the abstract conditions translate into explicit numerical bounds and serves as a guide for constructing admissible systems.

\begin{example}
Consider the parameters $\alpha=0.8$, $\beta=0.6$, $\theta=0.4$, $\nu=0.2$, $a=1$ and $\varepsilon=0.1$. It can be easily verified that the parameters $\alpha$, $\beta$, $\theta$, $\nu$ and $\varepsilon$ fulfill \eqref{hyp:mu-dichotomy}. Moreover, suppose that there exist \(\delta > 0\), \(\gamma > 0.4\), \(\lambda > 0\) and \(0.5<\xi<0.7\) satisfying  \eqref{g-cond2} and \eqref{Dg-cond}. Let $q>0$, then if $\delta\leq\dfrac{0.34q}{D(1+q)^2}$ and $\lambda\leq\dfrac{0.018q^2}{D(0,09D+0.48\tilde{K})(1+q)^3}$ we obtain the inequalities  \eqref{ineq-delta} and \eqref{ineq-lambda}. Hence, the hypotheses of Theorem \ref{Th-Diff} are satisfied.
\end{example}

\section*{Declaration of competing interest}

The authors declare that they have no known competing financial
interests or personal internships that could have appeared to influence the work reported
on the paper.

\section*{Declaration of availability of data}

The manuscript has not associated data.

\end{document}